\pgfplotsset{width=9cm,compat=1.5.1}
\newtheorem{lemma}{Lemma}
\newtheorem{theorem}{Theorem}
\newtheorem{corollary}{Corollary}
\newtheorem{proposition}{Proposition}
\newtheorem{example}{Example}
\begin{document}
\title{Quantum state transfer between twins in weighted graphs}

\author{
	Stephen Kirkland,\textsuperscript{\!\!1} \ Hermie Monterde,\textsuperscript{\!\!1} \  and Sarah Plosker\textsuperscript{1,2}
}

\maketitle










\begin{abstract}
Twin vertices in simple unweighted graphs are vertices that have the same neighbours and, in the case of weighted graphs with possible loops, the corresponding incident edges have equal weights. In this paper, we explore the role of twin vertices in quantum state transfer. In particular, we provide  characterizations of periodicity, perfect state transfer, and pretty good state transfer between twin vertices in a weighted graph with respect to its adjacency, Laplacian and signless Laplacian matrices. As an application, we provide characterizations of all simple unweighted double cones on regular graphs that exhibit periodicity, perfect state transfer, and pretty good state transfer.
\end{abstract}

\noindent \textbf{Keywords:} quantum state transfer, 
twin vertices, adjacency matrix, Laplacian matrix, signless Laplacian matrix\\
	
\noindent \textbf{MSC2010 Classification:} 
05C50; 
15A18;  
05C22; 
81P45; 
81A10 


\addtocounter{footnote}{1}
\footnotetext{Department of Mathematics, University of Manitoba, Winnipeg, MB, Canada R3T 2N2}
\addtocounter{footnote}{1}
\footnotetext{Department of Mathematics \& Computer Science, Brandon University, Brandon, MB, Canada R7A 6A9}
\tableofcontents

\section{Introduction}\label{secINTRO}

\begin{figure}[h!]\label{fig:small} 
	\begin{center}
		\begin{tikzpicture}
		\tikzset{enclosed/.style={draw, circle, inner sep=0pt, minimum size=.2cm}}
	   
	   \node[enclosed, fill=cyan] (w_1) at (0,2.5) {};
		\node[enclosed, fill=cyan] (w_2) at (1.5,2.5) {};
		\draw (w_1) -- (w_2);

		\node[enclosed, fill=cyan] (v_1) at (3,3) {};
		\node[enclosed] (v_2) at (4,2) {};
		\node[enclosed, fill=cyan] (v_3) at (5,3) {};
		\draw (v_1) -- (v_2);
 		\draw (v_2) -- (v_3);
	   
		\node[enclosed, fill=cyan] (u_1) at (6.5,2.5) {};
		\node[enclosed] (u_2) at (7.5,1.5) {};
		\node[enclosed, fill=cyan] (u_3) at (8.5,2.5) {};
		\node[enclosed] (u_4) at (7.5,3.5) {};
		
		\draw (u_1) --  (u_2);
		\draw (u_1) --  (u_4);
 		\draw (u_2) -- (u_3);
 		\draw (u_3) -- (u_4);
 		
 		\node[enclosed, fill=cyan] (x_1) at (10.1,3.35) {};
		\node[enclosed] (x_2) at (10.1,1.75) {};
		\node[enclosed, fill=cyan] (x_3) at (11.75,3.35) {};
		\node[enclosed] (x_4) at (11.75,1.75) {};
		
		\draw (x_1) --  (x_2);
		\draw (x_1) --  (x_4);
 		\draw (x_2) -- (x_3);
 		\draw (x_2) -- (x_4);
 		\draw (x_3) -- (x_4);
		\end{tikzpicture}
	\end{center}
	\caption{Small unweighted graphs that exhibit perfect state transfer between vertices marked blue}\label{yay}
\end{figure}
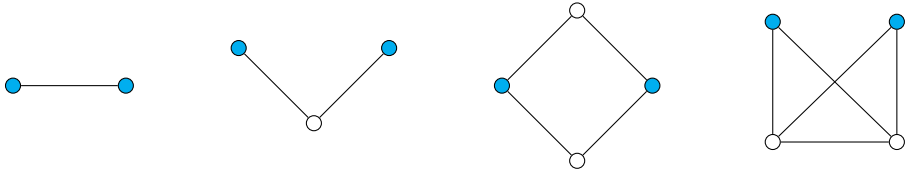

The concept of a continuous-time quantum walk was  introduced by Farhi and Gutmann \cite{Farhi1998} in 1998, but it was not until 2003 that Bose proposed the use of a continuous-time quantum walk on a path to transmit quantum states \cite{Bose:Quantum}. Motivated by high fidelity quantum state transfer, Christandl et al.\ introduced perfect state transfer \cite{Christandl:PSTonHypercubes,Christandl2005} in 2004. They showed that the path $P_n$ on $n$ vertices admits perfect state transfer only when $n=2,3$ with respect to the adjacency matrix, and $n=2$ with respect to the Laplacian matrix. This prompted researchers to search for new graphs with perfect state transfer. Some well-known examples include cubelike graphs \cite{Cheung2011}, integral circulant graphs \cite{Basic2009}, distance-regular graphs \cite{Coutinho2015} Hadamard diagonalizable graphs \cite{Johnston2017}, quotient graphs \cite{Bachman2011,Ge2011}, certain joins of graphs \cite{Angeles-Canul2009,Angeles-Canul2010}, as well as non-complete extended $p$-sums (NEPS) of some graphs \cite{Li2021,Pal2017a}. However, due to its rarity, perfect state transfer was relaxed by several authors (Godsil \cite{Godsil2012a}, Vinet and Zhedanov \cite{Vinet2012}) to what is known as pretty good state transfer, which  is ``good enough'' for physical lab setups. It turns out that $P_n$ exhibits pretty good state transfer for infinitely many $n$, as shown by Godsil et al.\ \cite{Godsil2012} and van Bommel \cite{VANBOMMEL2019} for the adjacency matrix, and Banchi et al.\ for the Laplacian matrix \cite{Banchi2017}. Pretty good state transfer was also investigated for cycles \cite{Pal2017}, a family of Cayley graphs \cite{Cao2020b}, double stars \cite{Fan2013}, and  weighted graphs with possible loops \cite{Eisenberg2019,Johnston2017,Kempton2017a}. More recently, the concept of pair state transfer was studied by Chen and Godsil \cite{Chen2019PairST}.

The graphs $K_2$ and $C_4$ are well-known examples of small graphs that exhibit perfect state transfer between antipodal vertices. It is also known that $P_3$ exhibits perfect state transfer between antipodal vertices with respect to the adjacency matrix, while $K_4\backslash e$ ($K_4$ minus an edge) admits perfect state transfer between non-adjacent vertices with respect to the Laplacian matrix. Upon examining the pairs of vertices in these graphs that admit perfect state transfer, one finds that they share the same neighbours (see Figure \ref{fig:small}). In other words, they are twins. Indeed, a number of examples of quantum state transfer in the literature can be viewed in the context of twins. For instance, there are infinitely many double cones known to exhibit adjacency and Laplacian perfect state transfer between their apexes (which are twins) \cite{Angeles-Canul2010,Alvir2016}. Kempton et al.\ also showed that adding loops with suitable weights to a pair of non-adjacent vertices with the same neighbours yields adjacency perfect state transfer between them \cite{Kempton2016}. Yet the role of twin vertices in quantum state transfer remains largely unexplored. Except for the MSc work of Monterde \cite{Monterde} and a recent paper of Pal \cite{PAL2022112872}, we are unaware of other work on this topic. In this paper, we provide a systematic approach to analyzing the properties of quantum state transfer between twin vertices in weighted graphs with possible loops. In particular, we focus on connected graphs. In the case of a disconnected graph, our results may be applied to its connected components.

In Section \ref{secTM}, we use the algebraic properties of graphs with twins to show that their transition matrices have a certain form. We then use this to provide an upper bound on the probability of state transfer between twins $u$ and $v$ in terms of $\lvert T \rvert$, where $T$ is a set of twins containing $u$ and $v$. This result then allows us to answer a question posed by Godsil in \cite{Godsil2017} in the affirmative. Section \ref{secJoins} provides some useful results on joins important for subsequent sections. In particular, we derive the transition matrix of the join of two graphs with respect to the signless Laplacian matrix. The results in this section may prove to be of independent interest in graph theory; however, we are only interested in building up the machinery to apply to our work on quantum state transfer between twins herein. The Section \ref{secPER} is devoted to a characterization of periodicity in twin vertices. We highlight our result in this section regarding the exact minimum period of a periodic vertex in a weighted graph with possible loops. In particular, we show that if the size of the eigenvalue support of a periodic vertex is at least three, then its minimum period exceeds $2\pi/(\lambda-\mu)$, where $\lambda$ and $\mu$ are the largest and smallest eigenvalues in the eigenvalue support. This result implies that the bound on the minimum period provided by Godsil \cite[Lemma 3.4]{Godsil2012a} is tight if and only if the size of the eigenvalue support is two. Section \ref{secPST} deals with perfect state transfer between twin vertices. In particular, we provide a characterization of perfect state transfer between twins, which then allows us to identify all connected and disconnected double cones on regular graphs whose apexes admit adjacency and signless Laplacian perfect state transfer. These double cones provide infinitely many examples of graphs that exhibit perfect state transfer and pretty good state transfer between twin vertices. Lastly, in Section \ref{secPGST}, we give a characterization of pretty good state transfer between twins, and again using this characterization, we identify all connected and disconnected double cones on regular graphs with non-periodic apexes that admit adjacency and signless Laplacian pretty good state transfer. The remainder of the present section is allotted to basic definitions and notation.

Some of the results presented in this paper can be found in the M.Sc.\ thesis of Monterde \cite{Monterde}. For the basics of graph theory and matrix
theory, we refer the reader to Godsil and Royle \cite{Godsil:AlgebraicGraph}, and Horn and Johnson \cite{Horn:MatrixAnalysis,horn94}, respectively. For more background on state transfer, see Godsil \cite{Godsil2012a}, and Coutinho and Godsil \cite{Coutinho2021}.

Throughout this paper, we assume that $X$ is a connected weighted undirected graph with possible loops but no multiple edges. We denote the vertex set of $X$ by $V(X)$, and we allow the edges of $X$ to have nonzero real weights (i.e., an edge can have either positive or negative weight). We say that $X$ is \textit{simple} if $X$ has no loops, and $X$ is \textit{unweighted} if all edges of $X$ have weight one. If $X$ is simple and unweighted, then we denote the complement of $X$ by $\overline{X}$, and we take the convention that $\overline{X}$ is also simple and unweighted. For $u\in V(X)$, we denote the set of neighbours of $u$ in $X$ as $N_X(u)$, and the characteristic vector of $u$ as $\textbf{e}_u$, which is a vector with a $1$ on the entry indexed by $u$ and $0$'s elsewhere. The all-ones vector of order $n$, the zero vector of order $n$, the $m\times n$ all-ones matrix, and the $n\times n$ identity matrix are denoted by $\textbf{1}_n$, $\textbf{0}_n$, $\textbf{J}_{m,n}$ and $I_n$, respectively. If $m=n$, then we write $\textbf{J}_{m,n}$ as $\textbf{J}_n$, and if the context is clear, then we simply write these matrices as $\textbf{1}$, $\textbf{0}$, $\textbf{J}$ and $I$, respectively. If $Y$ is another graph, then we write $X\cong Y$ to denote the fact that $X$ and $Y$ are isomorphic, and adopt the notation $X\vee Y$ for the join of $X$ and $Y$. We also represent the conjugate transpose of a matrix $M$ by $M^*$ and the characteristic polynomial of a square matrix $M$ in the variable $t$ by $\phi(M,t)$. Lastly, we denote the simple unweighted empty, cycle, complete, and path graphs on $n$ vertices as $O_n$, $C_n$, $K_n$, and $P_n$, resp., and the simple unweighted complete bipartite graph with partite sets of size $m$ and $n$ as $K_{m,n}$.

Two distinct vertices $u$ and $v$ of $X$ are \textit{twins} if the following conditions hold.
\begin{enumerate}
\item $N_X(u)\backslash \{u,v\}=N_X(v)\backslash \{u,v\}$.
\item The edges $(u,w)$ and $(v,w)$ have the same weight for each $w\in N_X(u)\backslash \{u,v\}$.
\item The loops on $u$ and $v$ have the same weight, and this weight is zero if those loops are absent.
\end{enumerate}
We also allow $u$ and $v$ to be adjacent, in which case $u$ and $v$ are called \textit{true twins}. Otherwise, $u$ and $v$ are called \textit{false twins}. Our definition above generalizes the definition of twin vertices from simple unweighted graphs to weighted graphs with loops.

Let $\omega,\eta\in\mathbb{R}$. A subset $T=T(\omega,\eta)$ of $V(X)$ with at least two vertices is a \textit{set of twins} in $X$ if each pair of vertices in $T$ is a pair of twins, where each vertex in $T$ has a loop of weight $\omega$, and the loops are absent if $\omega=0$, and every pair of vertices in $T$ are connected by an edge with weight $\eta$, and every pair of vertices in $T$ are not adjacent whenever $\eta=0$. Note that if $T$ is a set of twins in $X$, then either every pair of distinct vertices in $T$ are true twins, in which case $\eta\neq 0$, or every pair of distinct vertices in $T$ are false twins, in which case $\eta=0$. In particular, if $X$ is a simple unweighted graph, then $\omega=0$ and $\eta\in\{0,1\}$.

The adjacency matrix $A(X)$ of $X$ is the matrix such that
\begin{equation*}
(A(X))_{u,v}=
\begin{cases}
 \omega_{u,v}, &\text{if $u$ is adjacent to $v$}\\
 0, &\text{otherwise},
\end{cases}
\end{equation*}
where $\omega_{u,v}$ is the weight of the edge $(u,v)$. The degree matrix $D(X)$ of $X$ is the diagonal matrix of vertex degrees of $X$, where $\operatorname{deg}(u)=2\omega_{u,u}+\sum_{j\neq u}\omega_{u,j}$ for each $u\in V(X)$. The Laplacian matrix $L(X)$ of $X$ is the matrix $L(X)=D(X)-A(X)$, while the signless Laplacian matrix $Q(X)$ of $X$ is the matrix $Q(X)=D(X)+A(X)$. We use $M(X)$ to denote $A(X)$, $L(X)$ or $Q(X)$. If the context is clear, then we simply write $M(X)$, $A(X)$, $L(X)$, $Q(X)$ and $D(X)$ as $M$, $A$, $L$, $Q$ and $D$, respectively. We say that $X$ is \textit{integral} if all eigenvalues of $A(X)$ are integers, while we say that $X$ is \textit{Laplacian integral} (resp., \textit{signless Laplacian integral}) if all eigenvalues of $L(X)$ (resp., $Q(X)$) are integers.

\section{Transition Matrices}\label{secTM}

Let $X$ be a connected weighted graph with possible loops. Since $M$ is real symmetric, we can write
\begin{equation}
\label{specdec}
M=\sum_{j}\lambda_jE_j,
\end{equation} 
in its spectral decomposition, where the $\lambda_j$'s are the distinct eigenvalues of $M$ and each $E_j$ is the orthogonal projection matrix onto the eigenspace associated with $\lambda_j$. If the eigenvalues are not indexed, then we also denote by $E_{\lambda}$ the orthogonal projection matrix corresponding to the eigenvalue $\lambda$ of $M$. The matrix $M$ serves as a Hamiltonian for nearest-neighbour interactions of qubits in a quantum spin system represented by $X$. Taking the exponential of $M$ yields the following transition matrix
\begin{equation}
\label{M}
U(t)=e^{itM}
\end{equation}
of the \textit{(continuous-time) quantum walk} on $X$ with respect to $M$. Here, we flip between $t$ and $\tau$ to denote the time. Note that $U(t)$ is a complex symmetric unitary matrix, and so 
for any time $\tau$, $\sum_{j=1} \lvert U(\tau)_{u,j}\rvert ^2 =1$ for any vertex $u$ of $X$. For this reason, if $u$ and $v$ are vertices of $X$, then $\lvert U(\tau)_{u,v}\rvert^2$ is interpreted as the probability of quantum state transfer from $u$ to $v$ at time $\tau$.

Here are some important properties associated with state transfer. If $\lvert U(\tau)_{u,v}\rvert^2=1$, then we say that \textit{perfect state transfer} (PST) occurs from $u$ to $v$ at time $\tau$, and if $u=v$, then we say that $u$ is \textit{periodic} at time $\tau$. The minimum positive $\tau$ such that $\lvert U(\tau)_{u,v}\rvert^2=1$, respectively  $\lvert U(\tau)_{u,u}\rvert^2=1$, is called the minimum PST time, respectively the minimum period. If for every $\epsilon>0$, there exists $\tau$ such that $\lvert U(\tau)_{u,v}\rvert^2>1-\epsilon$, then we say that \textit{pretty good state transfer} (PGST) occurs from $u$ to $v$. Note that these various types of quantum state transfer depend on the matrix $M$, and hence, if the matrix $M$ is not specified, then the statement applies to $A$, $L$, and $Q$. We sometimes say adjacency periodicity, PST, and PGST when we talk about periodicity, PST, and PGST in the case that $M=A$; similar language applies when $M=L$ or $M=Q$. If $X$ is regular, i.e. all vertex degrees are equal,  then the quantum walks with respect to $A$, $L$, and $Q$ are equivalent, so that we get PST/PGST between $u$ and $v$ (resp., periodicity at $u$) with respect to $A$ if and only if we get PST/PGST between $u$ and $v$ (resp., periodicity at $u$) with respect to $M\in\{L,Q\}$. Meanwhile, if $X$ is bipartite, then $L$ and $Q$ are similar by a diagonal matrix of $\pm 1$'s so that 
\begin{equation}
\label{LQ}
U_L(t)_{u,v}=\pm U_Q(t)_{u,v}
\end{equation}
for any $u,v\in V(X)$. Thus, PST/PGST occurs between $u$ and $v$ (resp., periodicity) with respect to $L$ if and only if they occur with respect to $Q$. For more information about continuous-time quantum walks, see \cite{Godsil2012a,Alvir2016,Kendon2003}. For more about PST, see the survey of Kendon and Tamon \cite{Kendon2011}, Godsil \cite{Godsil2010} and Kay \cite{Kay2010}, and for periodicity, see the survey of Godsil \cite{Godsil2011}.

Making use of the fact that $f(x)=e^x$ is analytic, we can write (\ref{M}) using (\ref{specdec}) as
\begin{equation}
\label{specdecM}
U(t)=\sum_{j}e^{it\lambda_j}E_j.
\end{equation}
Let $u$ and $v$ be vertices in $X$. The \textit{eigenvalue support} of $u$ with respect to $M$, denoted $\sigma_u(M)$, is the set
\begin{equation*}
\sigma_u(M)=\{\lambda_j:E_j\textbf{e}_u\neq \textbf{0}\}.
\end{equation*}
With respect to $M$, we say that $u$ and $v$ are
\begin{enumerate}
\item \textit{cospectral} if $(E_j)_{u,u}=(E_j)_{v,v}$ for each $j$,
\item \textit{parallel} if $E_j\textbf{e}_u$ and $E_j\textbf{e}_v$ are parallel vectors for each $j$, i.e., for each $j$, there exists $c\in\mathbb{R}$ such that $E_j\textbf{e}_u=cE_j\textbf{e}_v$, and
\item \textit{strongly cospectral} if $E_j\textbf{e}_u=\pm E_j\textbf{e}_v$ for each $j$, in which case we define the sets
\begin{equation*}
\sigma_{uv}^+(M)=\{\lambda_j:E_j\textbf{e}_u=E_j\textbf{e}_v\neq \textbf{0}\}\ \text{and}\ \sigma_{uv}^-(M)=\{\lambda_j:E_j\textbf{e}_u=-E_j\textbf{e}_v\neq \textbf{0}\}.
\end{equation*}
\end{enumerate}
If $u$ and $v$ are cospectral with respect to $M$, then $\sigma_u(M)=\sigma_v(M)$ and (\ref{specdecM}) yields $U(t)_{u,u}=U(t)_{v,v}$, while if $u$ and $v$ are strongly cospectral with respect to $M$, then we get $\sigma_u(M)=\sigma_{uv}^+(M)\cup \sigma_{uv}^-(M)$. It is also known that if an automorphism maps $u$ to $v$, then they are cospectral with respect to $M$. The concepts of cospectrality, parallelism and strong cospectrality between two vertices in a graph with respect to its adjacency matrix were first studied in depth by Godsil and Smith \cite{Godsil2017}, and recently generalized to Hermitian matrices by Monterde \cite{Monterde2021}.

%


Next, we restate a spectral characterization of twin vertices with respect to $M(X)$ due to Monterde \cite[Lemma 2.9]{Monterde2021}.

\begin{lemma}
\label{alphabeta}
Let $T=T(\omega,\eta)$ be a set of twins in $X$. Then $u,v\in T$ if and only if both of the following conditions hold:
\begin{enumerate}
\item $\textbf{e}_u-\textbf{e}_v$ is an eigenvector of $M(X)$, and
\item the eigenvalue corresponding to $\textbf{e}_u-\textbf{e}_v$ is given by
\begin{equation}
\label{adjalpha}
\theta=
\begin{cases}
 \omega-\eta, &\text{if $M(X)=A(X)$}\\
 \text{deg}(u)-\omega+\eta, &\text{if $M(X)=L(X)$}\\
 \text{deg}(u)+\omega-\eta, &\text{if $M(X)=Q(X)$}.
\end{cases}
\end{equation}
\end{enumerate}
\end{lemma}

If $u$ and $v$ are twins, then Lemma \ref{alphabeta} implies that $\theta\in\sigma_u(M)$. Next, we state an algebraic characterization of twin vertices \cite[Lemma 2]{Monterde2021} as well as a property of twin vertices that is useful in quantum state transfer \cite[Lemma 2.10]{Monterde2021}.

\begin{lemma}
\label{aut}
Vertices $u$ and $v$ are twins in $X$ if and only if there exists an involution on $X$ that switches $u$ and $v$ and fixes all other vertices. Moreover, if $u$ and $v$ are twins in $X$, then $u$ and $v$ are cospectral.
\end{lemma}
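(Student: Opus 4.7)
The plan is to prove the equivalence by exhibiting the obvious candidate involution, and then to derive cospectrality by passing to the permutation matrix of this involution and using that it commutes with $M$.

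For the forward direction of the equivalence, assuming $u$ and $v$ are twins, I would define $\pi:V(X)\to V(X)$ by $\pi(u)=v$, $\pi(v)=u$, and $\pi(w)=w$ for all $w\in V(X)\setminus\{u,v\}$. This is manifestly an involution fixing every vertex other than $u$ and $v$, so the only thing to verify is that $\pi$ preserves edge weights. Conditions~(1) and (2) of the twin definition guarantee that for each $w\notin\{u,v\}$ the edges $\{u,w\}$ and $\{v,w\}$ are matched with equal weight; condition~(3) matches the loops on $u$ and $v$; and the edge $\{u,v\}$ (if present) is fixed setwise with its weight preserved trivially. For the reverse direction, an involution $\pi$ of the stated form is in particular a graph automorphism, so tracking the images of the edges and loop incident to $u$ immediately recovers conditions~(1)--(3).

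For cospectrality, let $P$ denote the permutation matrix of $\pi$, so that $P\textbf{e}_u=\textbf{e}_v$, $P\textbf{e}_v=\textbf{e}_u$, and $P\textbf{e}_w=\textbf{e}_w$ otherwise; since $\pi$ is an involution, $P=P^{-1}=P^T$. The automorphism relation yields $PA(X)P=A(X)$, and because $\pi$ also preserves degrees (as $\deg(u)=\deg(v)$ follows directly from conditions~(1)--(3)), $P$ commutes with $D(X)$ as well, hence with $L(X)$; in either case, $PM=MP$. Since each spectral idempotent $E_j$ in~(\ref{specdec}) is a polynomial in $M$, we obtain $PE_j=E_jP$, which rearranges to $E_j=PE_jP$. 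Therefore
\begin{equation*}
(E_j)_{v,v}=\textbf{e}_v^{T} E_j \textbf{e}_v=\textbf{e}_v^{T} P E_j P \textbf{e}_v=(P\textbf{e}_v)^{T} E_j (P\textbf{e}_v)=\textbf{e}_u^{T} E_j \textbf{e}_u=(E_j)_{u,u},
\end{equation*}
which is precisely the cospectrality condition at index $j$.

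There is no real obstacle in this proof; the only point that deserves care is treating the adjacency and Laplacian cases uniformly, which is handled by the observation that twin vertices have equal degrees, so the same permutation matrix $P$ commutes with both $D$ and $A$ and hence with $L$. Everything else is a direct translation of the combinatorial twin conditions into linear-algebraic commutation relations.
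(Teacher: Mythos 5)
Your proof is correct; note that the paper itself does not prove this lemma but cites it from Monterde's earlier work (Lemma 2 and Corollary 3 of \cite{Monterde2021}), and your argument is the standard one that the cited source uses: the transposition $(u\,v)$ is an automorphism precisely because of the three twin conditions, and cospectrality follows because the associated permutation matrix commutes with $M$ (for $L$ via the equal-degree observation) and hence with each spectral idempotent. All steps check out, including the key facts that $P=P^T$ and that each $E_j$ is a polynomial in $M$.
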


Let $f$ be an automorphism of $X$, and $u$, $v$ and $w$ be vertices of $X$. Then one can easily check that $U(t)_{u,v}=U(t)_{f(u),f(v)}$ for any $t\in\mathbb{R}$. Now, if we add that $f$ fixes $w$ but sends $u$ to $v$, then we get $U(t)_{w,u}=U(t)_{f(w),f(u)}=U(t)_{w,v}$ for any $t\in\mathbb{R}$. Thus, if $O_u$ is the orbit of $u$ under $f$, then $U(t)_{w,u}=U(t)_{w,v}$ for all $v\in O_u$. Since $U(t)$ is unitary, its $w$-th row gives us
\begin{equation*}
1=\sum_{j\in V(X)}\lvert U(t)_{w,j}\rvert^2=\lvert O_u\rvert \lvert U(t)_{w,u}\rvert^2+\sum_{j\notin O_u}\lvert U(t)_{w,j}\rvert^2,
\end{equation*}
for any $t\in\mathbb{R}$. Thus, if $U(t)_{w,j}=0$ for $j\notin O_u$, then $\lvert U(t)_{w,u}\rvert^2=\frac{1}{\lvert O_u\rvert}$. We summarize this in the following proposition, a part of which was first established by Coutinho \cite[Lemma 8.1.1]{Coutinho2014}.

\begin{proposition}
\label{permut}
If $f$ is an automorphism of $X$ that fixes $w$ and $O_u$ is the orbit of $u$ under $f$, then for any $t\in\mathbb{R}$, $U(t)_{w,u}=U(t)_{w,v}$ for all $v\in O_u$, and
\begin{equation*}
\lvert U(t)_{w,u}\rvert^2\leq \frac{1}{\lvert O_u\rvert},
\end{equation*}
with equality if and only if $U(t)_{w,j}=0$ for all $j\notin O_u$.
\end{proposition}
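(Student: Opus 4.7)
The plan is to formalize the computation sketched in the paragraphs immediately preceding the statement, which already pinpoint the relevant identities; I would organize it into three stages corresponding to the three claims.

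First, I would let $P$ be the permutation matrix associated with $f$. Since $f$ is an automorphism of the weighted graph $X$, a standard check (looking at entries) gives $M(X) = P^T M(X) P$, or equivalently $PM(X) = M(X)P$. Exponentiating then yields $U(t) = e^{itM(X)} = P^T e^{itM(X)} P = P^T U(t) P$, as in equation \eqref{lasteq}. To prove the first claim I would compute
\begin{equation*}
U(t)_{u,v} = \be_u^T U(t) \be_v = \be_u^T P^T U(t) P \be_v = (P\be_u)^T U(t) (P\be_v) = \be_{f(u)}^T U(t) \be_{f(v)} = U(t)_{f(u),f(v)},
\end{equation*}
using that $P\be_u = \be_{f(u)}$ for any vertex $u$.

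Second, for the moreover clause, I would apply the identity just derived with the pair $(w,u)$, using $f(w) = w$, to get $U(t)_{w,u} = U(t)_{w,f(u)}$. Iterating gives $U(t)_{w,u} = U(t)_{w,f^k(u)}$ for every nonnegative integer $k$, which, since $O_u = \{f^k(u) : k \geq 0\}$, establishes $(U(t))_{w,u} = (U(t))_{w,v}$ for all $v \in O_u$.

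Third, for the inequality, I would exploit unitarity of $U(t)$: the $w$-th row has squared-modulus sum equal to $1$, so
\begin{equation*}
1 = \sum_{j \in V(X)} \left|(U(t))_{w,j}\right|^2 = |O_u|\, \left|(U(t))_{w,u}\right|^2 + \sum_{j \notin O_u}\left|(U(t))_{w,j}\right|^2,
\end{equation*}
where I used the equality of entries on $O_u$ from the previous step. Since the tail sum is nonnegative, $\left|(U(t))_{w,u}\right|^2 \leq 1/|O_u|$, with equality precisely when every $(U(t))_{w,j} = 0$ for $j \notin O_u$. There is no real obstacle here; the only subtle point is verifying that the intertwining $M(X) = P^T M(X) P$ really does follow from $f$ being an automorphism of a \emph{weighted} graph with possibly loops (one must check that the three conditions defining a weighted-graph automorphism — preservation of adjacency, of edge weights, and of loop weights — together give equality of entries of $M(X)$ and $P^T M(X) P$ in both the adjacency and Laplacian cases), and this is a direct entrywise comparison that I would only sketch rather than belabor.
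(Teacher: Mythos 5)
Your proposal is correct and follows essentially the same route as the paper, which derives $U(t)=P^TU(t)P$ from $M(X)=P^TM(X)P$, deduces $U(t)_{u,v}=U(t)_{f(u),f(v)}$ and the constancy of $(U(t))_{w,\cdot}$ along the orbit $O_u$, and then applies unitarity of the $w$-th row to obtain the bound and the equality condition. No gaps to report.
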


The following result imposes a particular form on the transition matrices of graphs with twin vertices.

\begin{theorem}
\label{transmattw}
Vertices $u$ and $v$ are twins in $X$ if and only if for any $t\in\mathbb{R}$, $U(t)_{u,u}=U(t)_{v,v}$ and $U(t)_{w,u}=U(t)_{w,v}$ for all $w\in V(X)\backslash \{u,v\}$. Moreover, if $u$ and $v$ are twins in $X$, then $U(t)_{u,u}\neq U(t)_{v,u}$.
\end{theorem}

\begin{proof}
To prove necessity, let $u$ and $v$ be twin vertices in $X$. By Lemma \ref{aut}, there exists an automorphism $f$ of $X$ that switches $u$ and $v$, and fixes all other vertices. Thus for any $t\in\mathbb{R}$, Proposition \ref{permut} implies that $U(t)_{w,u}=U(t)_{w,v}$ for all $w\in V(X)\backslash\{u,v\}$. Now, by Lemma \ref{aut}, $u$ and $v$ are cospectral, and thus, $U(t)_{u,u}=U(t)_{v,v}$ for any $t\in\mathbb{R}$. To prove sufficiency, suppose $a=U(t)_{u,u}=U(t)_{v,v}$, $b=U(t)_{u,v}=U(t)_{v,u}$, and $U(t)_{w,u}=U(t)_{w,v}$ for all $w\in V(X)\backslash\{u,v\}$. A simple computation reveals that $U(t)(\textbf{e}_u-\textbf{e}_v)=(a-b)(\textbf{e}_u-\textbf{e}_v)$ so that $\textbf{e}_u-\textbf{e}_v$ is an eigenvector for $U(t)$ for any $t\in\mathbb{R}$. Consequently, $\textbf{e}_u-\textbf{e}_v$ is an eigenvector for $M$ corresponding to the eigenvalue $\theta$ given in (\ref{adjalpha}) satisfying $a-b=e^{it\theta}$. By Lemma \ref{alphabeta}, we get that $u$ and $v$ are twins in $X$. The latter statement is true because $a-b\neq 0$, otherwise the columns of $U(t)$ indexed by $u$ and $v$ are equal, i.e., $U(t)$ is singular, a contradiction.
\end{proof}

If $u$ and $v$ are twins, then Theorem \ref{transmattw} implies that $U(t)\textbf{e}_u$ and $U(t)\textbf{e}_v$ have equal entries except for those indexed by $u$ and $v$. A statement similar to Theorem \ref{transmattw} appears in \cite[Theorem 8.1.3]{Coutinho2014}, although we point out that since $U(t)_{u,u}\neq U(t)_{v,u}$, it cannot happen that $U(t)\textbf{e}_u=U(t)\textbf{e}_v$. Otherwise, $0$ is an eigenvalue of $U(t)$, which is a contradiction because $U(t)$ is unitary.

We now state a corollary to Theorem \ref{transmattw} which reveals important information about the entries of the transition matrix indexed by twin vertices.

\begin{corollary}
\label{yipee}
Let $T$ be a set of twins in $X$. For any $u,v\in T$ with $u\neq v$, $\lvert U(t)_{u,u}\rvert+ \lvert U(t)_{u,v}\rvert\geq 1$ for all $t\in\mathbb{R}$. Moreover, the following statements hold.
\begin{enumerate}
\item Vertex $u$ is periodic with period $\tau$ if and only if $(U(\tau))_{u,v}=0$.
\item If $\lvert T \rvert=2$, then perfect state transfer occurs between $u$ and $v$ at time $t$ if and only if $U(t)_{u,u}=0$, and pretty good state transfer occurs between $u$ and $v$ if and only if there exists a sequence of times $\{\tau_j\}$ such that $\displaystyle\lim_{j\rightarrow\infty}(U(\tau_j))_{u,u}=0$.
\item If $\lvert T \rvert\geq 3$ and $u\in T$, then $U(t)_{u,u}\neq 0$ for all $t\in\mathbb{R}$
\end{enumerate}
\end{corollary}

\begin{proof}
Let $u$ and $v$ be twins in $X$. From the proof of Theorem \ref{transmattw}, $U(t)_{u,u}-U(t)_{u,v}$ is an eigenvalue of $U(t)$ for any $t\in\mathbb{R}$. Using the triangle inequality and the fact that $U(t)$ is unitary, we obtain
\begin{equation}
\label{um}
1=\lvert U(t)_{u,u}-U(t)_{u,v}\rvert\leq \lvert U(t)_{u,u}\rvert+ \lvert U(t)_{u,v}\rvert.
\end{equation}
Hence, $\lvert U(t)_{u,v}\rvert=1$ if and only if $U(t)_{u,u}=0$, and $\lvert U(t)_{u,u}\rvert=1$ if and only if $U(t)_{u,v}=0$. If we further assume that $T$ is a set of twins such that $u,v\in T$ and $\lvert T \rvert\geq 3$, then Theorem \ref{transmattw} implies that $\lvert U(t)_{u,v}\rvert=\lvert U(t)_{u,w}\rvert$ whenever $w\in T\backslash\{u,v\}$. If $\lvert U(t)_{u,v}\rvert=1$, then $\lvert U(t)_{u,w}\rvert=1$ for all $w\in T\backslash\{u,v\}$, a contradiction because $U(t)$ is unitary. Thus, $\lvert U(t)_{u,v}\rvert<1$, and by (\ref{um}), $U(t)_{u,u}\neq 0$ for all $t\in\mathbb{R}$.
\end{proof}

Now, suppose $X$ has $n\geq 3$ vertices and $T$ is a set of twins in $X$. If $u,v\in T$ and we let $U(t)_{u,u}=a$ and $U(t)_{u,v}=b$, then labelling the vertices so that those in $T$ appear first, Theorem \ref{transmattw} yields
\begin{equation}
\label{eq1}
U(t)=\left[
\begin{array}{ccccc}
U_1&U_2 \\
 U_2^T&*\\
\end{array}
\right]
\end{equation}
where $U_1=(a-b)I_{\lvert T \rvert}+bJ_{\lvert T \rvert}$, $a\neq b$ and the columns of $U_2^T$ are identical. Since $U(t)$ is unitary, we have
\begin{equation}
\label{eq21}
\lvert U(t)_{u,u}\rvert^2+\left(\lvert T \rvert-1\right)\lvert U(t)_{u,v}\rvert^2+\sum_{w\notin T}\lvert U(t)_{u,w}\rvert^2=1
\end{equation}
for any $t\in\mathbb{R}$. Moreover, if $\lvert T \rvert\geq 3$, Corollary \ref{yipee}(3) implies that $\lvert U(t)_{u,u}\rvert^2> 0$, and so (\ref{eq21}) implies that
\begin{equation*}
\lvert U(t)_{u,v}\rvert^2<\frac{1}{\lvert T \rvert-1}.
\end{equation*}
These considerations yield the following result.

\begin{corollary}
\label{Lan}
Assume $X$ has $n\geq 3$ vertices and let $T$ be a set of twins in $X$. Then $U(t)$ assumes the form in (\ref{eq1}), and (\ref{eq21}) holds. Moreover, if $u,v\in T$, then $\lvert U(t)_{u,v}\rvert^2\leq \frac{1}{\lvert T \rvert-1}$ for all $t\in\mathbb{R}$, and this inequality is strict whenever $\lvert T \rvert\geq 3$.
\end{corollary}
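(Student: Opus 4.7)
The proof is essentially an assembly of facts already in hand, and my plan is to proceed in three steps corresponding to the three claims in the statement.

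First, to establish the block form (\ref{eq1}), I will label the vertices so those in $T$ come first and examine the restriction of $U(t)$ to the rows and columns indexed by $T$. For any two distinct $u,v\in T$, Theorem \ref{transmattw} (applied to the pair $u,v$, which are twins by hypothesis) gives $(U(t))_{u,u}=(U(t))_{v,v}$ and $(U(t))_{u,v}=(U(t))_{v,u}$. Setting $a=(U(t))_{u,u}$ and $b=(U(t))_{u,v}$ (values that by the same theorem are independent of the particular pair chosen from $T$), the principal $|T|\times|T|$ submatrix of $U(t)$ is $(a-b)I_{|T|}+bJ_{|T|}$, which is exactly the block $U_1$; moreover $a\neq b$ by the last assertion of Theorem \ref{transmattw}. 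For the off-diagonal block, given any $w\notin T$ and any pair $u,v\in T$, Theorem \ref{transmattw} yields $(U(t))_{w,u}=(U(t))_{w,v}$, so each row of $U_2^T$ indexed by a vertex outside $T$ is constant across $T$, i.e., the columns of $U_2^T$ are equal.

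Second, (\ref{eq21}) follows immediately by applying unitarity of $U(t)$ to the row indexed by $u$: the norm-squared of that row decomposes as $|U(t)_{u,u}|^2$, plus the contribution $(|T|-1)|U(t)_{u,v}|^2$ from the other vertices of $T$ (using that $|U(t)_{u,v'}|$ is the same for every $v'\in T\setminus\{u\}$ by the first step), plus the remaining sum over $w\notin T$.

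Third, for the probability bound I simply rearrange (\ref{eq21}): since $|U(t)_{u,u}|^2$ and each $|U(t)_{u,w}|^2$ are nonnegative, I obtain
\begin{equation*}
(|T|-1)|U(t)_{u,v}|^2 \;=\; 1 - |U(t)_{u,u}|^2 - \sum_{w\notin T}|U(t)_{u,w}|^2 \;\leq\; 1,
\end{equation*}
giving $|U(t)_{u,v}|^2\leq \tfrac{1}{|T|-1}$. For the strict inequality when $|T|\geq 3$, I invoke Corollary \ref{yipee}(2), which guarantees $(U(t))_{u,u}\neq 0$, so the subtracted term $|U(t)_{u,u}|^2$ is strictly positive and the inequality is strict.

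The whole argument is routine bookkeeping; no step presents a genuine obstacle, since the structural input (Theorem \ref{transmattw}) and the nonvanishing input (Corollary \ref{yipee}(2)) have already been proved. The only point requiring a moment of care is to be sure the constants $a$ and $b$ are well defined independently of which pair $u,v\in T$ is chosen, which is precisely what Theorem \ref{transmattw} guarantees.
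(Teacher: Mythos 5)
Your proof is correct and follows essentially the same route as the paper: Theorem \ref{transmattw} gives the block form (\ref{eq1}), unitarity of the row indexed by $u$ gives (\ref{eq21}), and Corollary \ref{yipee}(2) supplies the strictness when $|T|\geq 3$. Your extra remark that $a$ and $b$ are well defined independently of the pair chosen (by applying Theorem \ref{transmattw} to the pair $v,w$ with $u$ as the fixed outside vertex) is a small point the paper leaves implicit, but it is handled correctly.
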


If $X$ is connected and $T=V(X)$, then Corollary \ref{Lan} yields $U(t)=(a-b)I+b\textbf{J}$, which explains the form of the transition matrix of $K_n$.

In \cite{Godsil2017}, Godsil posed the problem: find examples of cospectral vertices $u$ and $v$ such that for some constant $\delta>0$, $\lvert U(t)_{u,v}\rvert<1-\delta$ for all $t$. We address this problem by using the above corollary. Take any graph with a set of twins $T$ with $\lvert T\rvert \geq 3$. Then any two vertices $u,v\in T$ are cospectral by Lemma \ref{aut}, and Corollary \ref{Lan} gives us $\lvert U(t)_{u,v}\rvert^2< \frac{1}{\lvert T \rvert-1}=1-\delta$ for all $t\in\mathbb{R}$, where $\delta=\frac{\lvert T\rvert -2}{\lvert T\rvert -1}$. Next, we have the following consequence of (\ref{eq21}).


\begin{corollary}
\label{nopstpgsttw}
Assume $X$ has $n\geq 3$ vertices and let $T$ be a set of twins in $X$. Then no vertex in $T$ can be involved in pretty good state transfer with a vertex that is not in $T$. Moreover, if $\lvert T \rvert\geq 3$, then any vertex in $T$ cannot be involved in pretty good state transfer with any vertex in $X$.
\end{corollary}

\section{Joins}\label{secJoins}
 
The exploration of joins with respect to quantum state transfer is not entirely new (see for instance, \cite{Angeles-Canul2009,Angeles-Canul2010}). Here, we provide a systematic approach for studying quantum state transfer between twin vertices that arise from joining either a complete or empty graph with another graph (possibly regular). The results we develop here are instrumental in completing the discussion of double cones in Section \ref{secPST}. We start by surveying the eigenvalue supports of vertices in joins of simple unweighted graphs, beginning with the adjacency matrix.

\begin{lemma}
\label{esupp11}
Let $X$ be a $k$-regular graph on $m\geq 2$ vertices and $Y$ be an $\ell$-regular graph on $n\geq 1$ vertices. Define $\lambda^{\pm}=\frac{1}{2}\left(k+\ell\pm\sqrt{D}\right)$ and $D=(k-\ell)^2+4mn$. Consider $Z=X\vee Y$, and let $u\in V(X)$ and $w\in V(Y)$. If one of $X$ and $Y$ is not complete, then $\lambda^{\pm}\in \sigma_u(A)$, $k\in \sigma_u(A)$ if and only if $X$ is disconnected, and no eigenvalue of $A(Y)$ is contained in $\sigma_u(A)$. The following also hold.
\begin{enumerate}
\item If $X=K_m$ and $Y\neq K_n$, then $\sigma_u(A)=\{\lambda^{\pm},-1\}$.
\item Let $X\neq K_m$. If $X$ is connected, then $\sigma_u(A)=\{\lambda^{\pm}\}\cup \sigma_u(A(X))\backslash\{k\}$. Otherwise, $\sigma_u(A)=\{\lambda^{\pm}\}\cup \sigma_u(A(X))$. In particular, if $X=O_m$, then $\sigma_u(A)=\{\lambda^{\pm},0\}$, and if we add that $Y=O_1$, then $\sigma_w(A)=\{\pm\sqrt{m}\}$.
\end{enumerate}
\end{lemma}

\begin{proof}
Let $\lambda_1\leq \ldots\leq \lambda_m=k$ be the eigenvalues of $A(X)$ and $\mu_1\leq \ldots\leq \mu_n=\ell$ be the eigenvalues of $A(Y)$. Using \cite[Equation (12.2.1)]{Coutinho2021}, the spectral decomposition of $A(Z)$ is
\begin{equation}
\label{esuppeq11}
A(Z)=\lambda^+E_{\lambda^+}
+\lambda^-E_{\lambda^-}
+\sum_{\lambda\neq \lambda_m}\lambda\left[ \begin{array}{ccccc} F_{\lambda}&\textbf{0} \\ \textbf{0}&\textbf{0} \end{array} \right]+\sum_{\mu\neq \mu_n}\mu\left[ \begin{array}{ccccc} \textbf{0}&\textbf{0} \\ \textbf{0}&F_{\mu} \end{array} \right],
\end{equation}
where the last two terms are absent whenever both $X$ and $Y$ are complete graphs, the last term in the above sum is absent if $n=1$, $F_k=E_k-\frac{1}{m}\textbf{J}_m$, $F_{\lambda}=E_{\lambda}$ if $\lambda<k$, and
\begin{equation*}
E_{\lambda^{\pm}}=\frac{1}{\pm m\sqrt{D}(k-\lambda^{\mp})}\left[\begin{array}{ccccc} (k-\lambda^{\mp})^2\textbf{J}_{m}&m(k-\lambda^{\mp})\textbf{J}_{m,n} \\ m(k-\lambda^{\mp})\textbf{J}_{n,m}&m^2\textbf{J}_n  \end{array} \right].
\end{equation*}
Moreover, $k$ is an eigenvalue of $A(Z)$ with orthogonal projection matrix $\left[\begin{array}{ccccc} F_{k}&\textbf{0} \\ \textbf{0}&\textbf{0} \end{array} \right]$ such that $F_k\textbf{e}_u\neq 0$ if and only if $X$ is disconnected. From these considerations, the first and second statements follow immediately. 
\end{proof}

For the Laplacian case, we have the following.

\begin{lemma}
\label{esupp22}
Let $X$ and $Y$ be graphs on $m\geq 2$ and $n\geq 1$ vertices, respectively. Consider $Z=X\vee Y$, and let $u\in V(X)$ and $w\in V(Y)$. The following hold in $Z$.
\begin{enumerate}
\item If $X=K_m$, then $\sigma_u(L)=\{0,m+n\}$.
\item Let $X\neq K_m$. If $X$ is connected, then $\sigma_u(L)=\{0,m+n,\lambda+n:0<\lambda\in\sigma_u(L(X))\}$. Otherwise, $\sigma_u(L)=\{0,m+n,n,\lambda+n:0<\lambda\in\sigma_u(L(X))\}$. In particular, if $X=O_m$, then $\sigma_u(L)=\{0,m+n,n\}$, and if we add that $Y=O_1$, then $\sigma_w(L)=\{0,m+1\}$.
\end{enumerate}
\end{lemma}

\begin{proof}
Let $0=\lambda_1\leq \ldots\leq \lambda_m$ be the eigenvalues of $L(X)$ and $0=\mu_1\leq \ldots\leq \mu_n$ be the eigenvalues of $L(Y)$. Using \cite[Equation 31]{Alvir2016}, the spectral decomposition of $L(Z)$ is given by
\begin{equation}
\label{esuppeq22}
L(Z)=\frac{1}{m+n}(0)\textbf{J}_{m,n}
+(m+n)E_{m+n}
+\sum_{\lambda\neq \lambda_1}(\lambda+n)\left[ \begin{array}{ccccc} F_{\lambda}&\textbf{0} \\ \textbf{0}&\textbf{0} \end{array} \right]+\sum_{\mu\neq \mu_1}(\mu+m)\left[ \begin{array}{ccccc} \textbf{0}&\textbf{0} \\ \textbf{0}&F_{\mu} \end{array} \right]
\end{equation}
where $E_{m+n}=\frac{1}{mn(m+n)}\left[ \begin{array}{ccccc} n^2\textbf{J}_m&-mn\textbf{J}_{m,n} \\ -mn\textbf{J}_{n,m}&m^2\textbf{J}_{n} \end{array} \right]$, the third (resp., fourth) term is absent if $X=K_m$ (resp., $Y=K_n$), the fourth term is absent if $n=1$, $F_0=E_0-\frac{1}{m}\textbf{J}_m$ and $F_{\lambda}=E_{\lambda}$ whenever $\lambda>0$. Moreover, $n$ is an eigenvalue of $L(Z)$ with orthogonal projection matrix $\left[ \begin{array}{ccccc} F_{0}&\textbf{0} \\ \textbf{0}&\textbf{0} \end{array} \right]$ such that $F_0\textbf{e}_u\neq 0$ if and only if $X$ is disconnected. From these considerations, conclusions 1-2 are straightforward.
\end{proof}

With respect to the Laplacian matrix, Alvir et al.\ calculated the transition matrix of a join of two simple unweighted graphs which are not necessarily regular \cite[Fact 8]{Alvir2016}. More recently, Coutinho and Godsil provided a similar result for the case of the adjacency matrix with the additional condition that the graphs joined are regular \cite[Lemma 12.3.1]{Coutinho2021}. For our next result, we derive the transition matrix of a join of two simple unweighted regular graphs with respect to the signless Laplacian matrix.

\begin{theorem}
\label{qjoin}
Let $X$ be a $k$-regular graph on $m\geq 2$ vertices, and $Y$ be an $\ell$-regular graph on $n\geq 1$ vertices. Let $p=2k-2\ell+n-m$, $D=(2k+2\ell+n+m)^2-8(2k\ell+km+\ell n)$, $\lambda^{\pm}=\frac{1}{2}\left(2k+2\ell+n+m\pm \sqrt{D}\right)$ and consider the matrix $E_{\lambda^{\pm}}$ in (\ref{pD}). Denote the transition matrices of $Z=X\vee Y$, $X$ and $Y$ with respect to the signless Laplacian matrix by $U_{Q}(t)$, $U_1(t)$ and $U_2(t)$, respectively. Then
\begin{equation}
\label{qtrans}
U_Q(t)=e^{it\lambda^+}E_{\lambda^+}+e^{it\lambda^-}E_{\lambda^-}+ \left[\begin{array}{ccccc} e^{itn}U_X(t)-\frac{e^{it(2k+n)}}{m}\textbf{J}_m &\textbf{0} \\ \textbf{0}&  e^{itm}U_Y(t)-\frac{e^{i(2\ell+m)}}{n}\textbf{J}_n \end{array} \right].
\end{equation}

\end{theorem}

\begin{proof}
Let $\lambda_1\leq \ldots\leq \lambda_m=2k$ be the eigenvalues of $Q(X)$ and $\mu_1\leq \ldots\leq \mu_n=2\ell$ be the eigenvalues of $Q(Y)$. If $\textbf{v}_j$ is an eigenvector of $Q(X)$ corresponding to $\lambda_j$ for $j<m$ and $\textbf{v}_j$ is orthogonal to $\textbf{1}$, then $\lambda_j+n$ is an eigenvalue of $Q(Z)$ with corresponding eigenvector $\left[ \begin{array}{ccccc} \textbf{v}_j\\ \textbf{0} \end{array} \right]$ for $j=1,\ldots,m-1$. Similarly, if $\textbf{w}_j$ is an eigenvector of $Q(Y)$ corresponding to $\mu_j$ for $j<m$ and $\textbf{w}_j$ is orthogonal to $\textbf{1}$, then $\mu_j+m$ is an eigenvalue of $Q(Z)$ with corresponding eigenvector $\left[ \begin{array}{ccccc} \textbf{0}\\ \textbf{w}_j \end{array} \right]$ for $j=1,\ldots,n-1$. Using equitable partitions, one can show that the remaining two eigenvalues of $Q(Z)$ are $\lambda^{\pm}$ with corresponding eigenvectors $\textbf{v}^{\pm }=\left[\begin{array}{ccccc} \left(p\pm \sqrt{D}\right)\textbf{1}_m\\ 2m\textbf{1}_n \end{array} \right]$. Thus, we may write
\begin{equation}
\label{wak}
Q(Z)=\lambda^+E_{\lambda^+}+\lambda^-E_{\lambda^-}+\sum_{\lambda\neq \lambda_m} (\lambda+n) \left[ \begin{array}{ccccc} F_{\lambda}&\textbf{0} \\ \textbf{0}&\textbf{0} \end{array} \right]+\sum_{\mu\neq \mu_n} (\mu+m) \left[ \begin{array}{ccccc} \textbf{0}&\textbf{0} \\ \textbf{0}&F_{\mu} \end{array} \right]
\end{equation}
where $F_{2k}=E_{2k}-\frac{1}{m}\textbf{J}_m$, $F_{\lambda}=E_{\lambda}$ whenever $\lambda<2k$, and
\begin{equation}
\label{pD}
E_{\lambda^{\pm}}=\frac{1}{m\left[\left(p\pm \sqrt{D}\right)^2+4mn\right]}\left[\begin{array}{ccccc} \left(p\pm \sqrt{D}\right)^2 \textbf{J}_m & 2m\left(p\pm \sqrt{D}\right) \textbf{J}_{m,n}\\ 2m\left(p\pm \sqrt{D}\right)\textbf{J}_{n,m}& 4m^2\textbf{J}_n \end{array} \right].
\end{equation}
Moreover, $2k$ is an eigenvalue of $Q(Z)$ with orthogonal projection matrix $\left[\begin{array}{ccccc} F_{2k}&\textbf{0} \\ \textbf{0}&\textbf{0} \end{array} \right]$ such that $F_{2k}\textbf{e}_u\neq 0$ if and only if $X$ is disconnected. Using (\ref{specdecM}), (\ref{wak}), and the fact that $U_X(t)=\frac{e^{i2kt}}{m}\textbf{J}_m+\sum_{\lambda\neq 2k}e^{it\lambda}F_{\lambda}$ completes the proof.
\end{proof}


We note that the last two terms in (\ref{qtrans}) are absent whenever both $X$ and $Y$ are complete and the last term in the above sum is absent whenever $n=1$.

Finally, we state the following corollary about eigenvalue supports of vertices in a join with respect to the signless Laplacian matrix. The proof is similar to Lemma \ref{esupp11}.

\begin{corollary}
\label{esupp33}
Assume the hypothesis of Theorem \ref{qjoin} holds. If one of $X$ and $Y$ is not complete, then $\lambda^{\pm}\in \sigma_u(Q)$, $2k\in \sigma_u(Q)$ if and only if $X$ is disconnected, and no eigenvalue of $Q(Y)$ is contained in $\sigma_u(Q)$. The following also hold in $Z$.
\begin{enumerate}
\item If $X=K_m$ and $Y\neq K_n$, then $\sigma_u(Q)=\{\lambda^{\pm},m+n-2\}$.
\item Let $X\neq K_m$. If $X$ is connected, then $\sigma_u(Q)=\{\lambda^{\pm}\}\cup\sigma_u(Q(X))\backslash\{2k\}$. Otherwise, $\sigma_u(Q)=\{\lambda^{\pm}\}\cup\sigma_u(Q(X))$. In particular, if $X=O_m$, then $\sigma_u(Q)=\{\lambda^{\pm},n\}$.
\end{enumerate}
\end{corollary}

\section{Periodicity}\label{secPER}

Denote the minimum period of a periodic vertex $u$ by $\rho$. Then every period $\tau$ of $u$ is an integer multiple of $\rho$. Moreover, if PST occurs between $u$ and $v$ at time $\tau$, then both of them are periodic at time $2\tau$. The converse of this is not necessarily true as periodic vertices need not exhibit PST. However, it is shown by Godsil in \cite{Godsil2012a} that if $u$ is periodic and there is PST between $u$ and $v$, then the minimum PST time between $u$ and $v$ is $\rho/2$, and PST occurs between $u$ and $v$ at every odd multiple of $\rho/2$. Now, since periodicity is a necessary condition for PST, to characterize PST between twin vertices, we first need to characterize periodic twin vertices. To do this, we state the famous Ratio Condition due to Godsil \cite{Godsil2011}.

\begin{theorem}
\label{ratiocon}
Let $X$ be a weighted graph, possibly with  loops. The following are equivalent. 
\begin{enumerate}
\item Vertex $u$ of $X$ is periodic.
\item For all $\lambda_p,\lambda_q,\lambda_r,\lambda_s\in\sigma_u(M)$ with $\lambda_r\neq \lambda_s$, we have
\begin{equation}
\label{rc}
\dfrac{\lambda_p-\lambda_q}{\lambda_r-\lambda_s}\in\mathbb{Q}.
\end{equation}
\end{enumerate}
If in addition we assume that $\phi(M,t)\in \mathbb{Z}[x]$, then $u$ is periodic if and only if either (i) $\sigma_u(M)\subseteq \mathbb{Z}$, or (ii) every $\lambda_j\in \sigma_u(M)$ is of the form $\lambda_j=\frac{1}{2}\left(a+c_j\sqrt{\Delta}\right)$, where $a$, $c_j$ and $\Delta>1$ are integers such that $\Delta$ is square-free, and the difference between any two eigenvalues in $\sigma_u(M)$ is an integer multiple of $\sqrt{\Delta}$.
\end{theorem}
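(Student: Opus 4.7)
The plan is to establish the equivalence of (1) and (2) via a direct spectral computation, and then bootstrap the rationality of the ratios to the stronger half-integer description using a Galois-theoretic argument.

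For the main equivalence, the spectral decomposition (\ref{specdecM}) yields $U(\tau)\textbf{e}_u = \sum_{\lambda_j\in\sigma_u(M)} e^{i\tau\lambda_j}E_j\textbf{e}_u$, since $E_j\textbf{e}_u = \textbf{0}$ for $\lambda_j\notin\sigma_u(M)$. Because $U(\tau)$ is unitary, $|U(\tau)_{u,u}|=1$ holds if and only if $U(\tau)\textbf{e}_u$ is a unit scalar multiple of $\textbf{e}_u$, and the mutual orthogonality of the nonzero vectors $\{E_j\textbf{e}_u : \lambda_j\in\sigma_u(M)\}$ (which sum to $\textbf{e}_u$) then forces $e^{i\tau\lambda_j} = \mu$ for every $\lambda_j\in\sigma_u(M)$. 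Thus periodicity at $\tau$ is equivalent to $\tau(\lambda_p-\lambda_q)\in 2\pi\mathbb{Z}$ for all $\lambda_p,\lambda_q\in\sigma_u(M)$, and the existence of a positive $\tau$ simultaneously satisfying all these relations is in turn equivalent to the ratios $(\lambda_p-\lambda_q)/(\lambda_r-\lambda_s)$ being rational: necessity comes from dividing two such integer-multiple relations, while sufficiency follows from clearing denominators and taking an appropriate least common multiple.

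For the stronger description under $\phi(M,t)\in\mathbb{Z}[x]$, sufficiency of (i) and (ii) is immediate, since pairwise differences of integers, or of numbers of the form $\tfrac{1}{2}(a+c_j\sqrt{\Delta})$, have rational ratios. For necessity, the key tool is Galois stability of $\sigma_u(M)$: if $K$ is a Galois extension of $\mathbb{Q}$ containing the spectrum of $M$ and $\sigma\in\text{Gal}(K/\mathbb{Q})$, then applying $\sigma$ entrywise to $E_\lambda$ yields the projection $E_{\sigma\lambda}$ (because $\sigma M = M$), and $\sigma\textbf{e}_u=\textbf{e}_u$, giving $\lambda\in\sigma_u(M)\iff \sigma\lambda\in\sigma_u(M)$. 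Fix distinct $\lambda_1,\lambda_2\in\sigma_u(M)$ and set $\delta=\lambda_2-\lambda_1$; the ratio condition gives $\lambda_j-\lambda_1=r_j\delta$ with $r_j\in\mathbb{Q}$, and Galois stability forces $\sigma\delta\in\mathbb{Q}\delta$ for every $\sigma$. Hence $\sigma\mapsto\sigma\delta/\delta$ is a homomorphism from the finite group $\text{Gal}(K/\mathbb{Q})$ into $\mathbb{Q}^\times$, so $\sigma\delta=\pm\delta$ and $\delta^2\in\mathbb{Q}$; since $\delta$ is an algebraic integer, $\delta^2\in\mathbb{Z}$.

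The proof then splits according to whether $\delta$ is rational. If $\delta\in\mathbb{Q}$, the Galois invariance of $\sum_{\lambda\in\sigma_u(M)}\lambda$ forces $\lambda_1\in\mathbb{Q}$ (since the sum equals $|\sigma_u(M)|\lambda_1$ plus a rational), whence every $\lambda_j\in\mathbb{Q}$ is in fact an integer, yielding case (i). If $\delta\notin\mathbb{Q}$, then $\delta=\pm d\sqrt{\Delta}$ for a unique square-free $\Delta>1$; the same averaging places $\lambda_1$ (hence all $\lambda_j$) in $\mathbb{Q}(\sqrt{\Delta})$, and writing $\lambda_j=a_j+b_j\sqrt{\Delta}$ the ratio condition forces all $a_j$ to coincide. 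A brief parity analysis inside the ring of integers of $\mathbb{Q}(\sqrt{\Delta})$ (which is $\mathbb{Z}[\sqrt{\Delta}]$ or $\mathbb{Z}[\tfrac{1+\sqrt{\Delta}}{2}]$ according to $\Delta\pmod 4$) then delivers the $\tfrac{1}{2}(a+c_j\sqrt{\Delta})$ form with $a,c_j\in\mathbb{Z}$, together with the fact that $c_j-c_k$ is always even, so that $\lambda_j-\lambda_k$ is an integer multiple of $\sqrt{\Delta}$. I expect the main obstacle to be this last part: one must carefully resist concluding prematurely that $\lambda_1\in\mathbb{Q}$ in the irrational case, and it is the Galois averaging of the first symmetric function $\sum_\lambda \lambda$ — not just the behaviour of $\delta$ — that is decisive in confining the whole support to a single quadratic field.
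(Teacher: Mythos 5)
The paper does not prove this theorem at all --- it is quoted verbatim from Godsil's work on periodic graphs --- so there is no internal proof to compare against. Your argument is correct and is essentially the standard one behind the cited result: reducing periodicity at time $\tau$ to the simultaneous conditions $e^{i\tau\lambda_j}=\mu$ via the orthogonality of the nonzero vectors $E_j\be_u$, obtaining the ratio condition by dividing two of the resulting integrality relations and, conversely, clearing denominators with an lcm; and then, for the integral case, using Galois stability of $\sigma_u(M)$ to show $\sigma\mapsto\sigma\delta/\delta$ lands in a finite subgroup of $\mathbb{Q}^\times$, whence $\sigma\delta=\pm\delta$, $\delta^2\in\mathbb{Z}$, and the trichotomy between $\sigma_u(M)\subseteq\mathbb{Z}$ and the half-integer quadratic form via the ring of integers of $\mathbb{Q}(\sqrt{\Delta})$. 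The details you sketch (the averaging of $\sum_{\lambda\in\sigma_u(M)}\lambda$ to pin $\lambda_1$ into $\mathbb{Q}$ or $\mathbb{Q}(\sqrt{\Delta})$, and the parity argument giving $c_j-c_k$ even) all go through.

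One step deserves flagging: your justification of Galois stability of the eigenvalue support rests on ``$\sigma M=M$,'' i.e.\ on $M$ having rational entries. That is \emph{not} implied by the stated hypothesis $\phi(M,t)\in\mathbb{Z}[x]$ for a general real-weighted graph: one can build a real symmetric $3\times 3$ matrix $M=Q\operatorname{diag}(\lambda_1,\lambda_2,\lambda_3)Q^T$ whose characteristic polynomial is an irreducible integral cubic with three real roots, choosing the orthogonal matrix $Q$ so that its third column is orthogonal to $\be_1$; then $\sigma_1(M)=\{\lambda_1,\lambda_2\}$, vertex $1$ is periodic because the ratio condition is vacuous for a two-element support, yet the eigenvalues are cubic irrationalities fitting neither (i) nor (ii). So your proof (like Godsil's original, which is stated for integer matrices) actually establishes the ``in addition'' clause under the stronger hypothesis that $M$ is a rational matrix. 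This is a defect of the theorem as transcribed in the paper rather than of your argument --- your parenthetical ``because $\sigma M=M$'' is precisely the hidden hypothesis --- but you should state it explicitly rather than fold it into the assumption $\phi(M,t)\in\mathbb{Z}[x]$.
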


Eigenvalue supports are known to contain at least two elements \cite[Proposition 2.8]{Monterde2021}. The following result determines the minimum period of periodic vertices.

\begin{theorem}
\label{minperiod}
Let $u$ and $v$ be vertices in $X$, and $\sigma_u(M)=\{\lambda_1,\lambda_2,\ldots,\lambda_n\}$ with $ \lambda_1>\lambda_2$.
\begin{enumerate}
\item If $\lvert\sigma_u(M)\rvert=2$, then $u$ is periodic with $\rho=\frac{2\pi}{\lambda_1-\lambda_2}$.
\item If $\lvert\sigma_u(M)\rvert\geq 3$ and $u$ is periodic, then $\rho=\frac{2\pi q}{\lambda_1-\lambda_2}$, where $q=\operatorname{lcm}(q_2,\ldots,q_n)$ and each $q_j$ is an integer such that $\frac{\lambda_1-\lambda_j}{\lambda_1-\lambda_2}=\frac{p_j}{q_j}$ for some integer $p_j$ such that $\text{gcd}(p_j,q_j)=1$.
\end{enumerate}
\end{theorem}

\begin{proof}
The first statement is straightforward, and so we only prove the second. Let us suppose that $u$ is periodic with $\sigma_u(M)=\{\lambda_1,\ldots,\lambda_n\}$ for some $n\geq 3$. Then Theorem \ref{ratiocon} holds, and we may let $\frac{\lambda_1-\lambda_j}{\lambda_1-\lambda_2}=\frac{p_j}{q_j}$, where each $p_j$ and $q_j$ are integers such that $\text{gcd}(p_j,q_j)=1$. The fact that $u$ is periodic is equivalent to the existence of a time $t$ and unit $\gamma\in\mathbb{C}$ such that
\begin{equation*}
U(t)\textbf{e}_u=\gamma \textbf{e}_u.
\end{equation*}
The spectral decomposition of $U(t)$ in (\ref{specdecM}) allows us to write the above equation as $\sum_je^{it\lambda_j}E_je_u=\sum_j\gamma E_je_u$. Equivalently, $e^{it(\lambda_1-\lambda_j)}=1$, which holds if and only if for each $j\geq 2$, we have $t(\lambda_1-\lambda_j)=2k\pi$ for some integer $k$. Now, suppose $\rho=\frac{2\pi z}{\lambda_1-\lambda_2}$ for some $z\in\mathbb{R}$. Since $\text{gcd}(p_j,q_j)=1$ for each $j\geq 2$, we get that $\rho(\lambda_1-\lambda_j)=2\pi z\left(\frac{\lambda_1-\lambda_j}{\lambda_1-\lambda_2}\right)=\frac{2\pi zp_j}{q_j}$ is an integer multiple of $2\pi$ if and only if $z$ is the minimum integer such that each $q_j$ divides $z$. Therefore, $z=q$, where $q=\operatorname{lcm}(q_2,\ldots,q_n)$ and so $\rho=\frac{2\pi q}{\lambda_1-\lambda_2}$.
\end{proof}

If $\lambda_1$ and $\lambda_2$ are the largest and smallest eigenvalues in $\sigma_u(M)$, then Godsil showed that $\rho\geq \frac{2\pi}{\lambda_1-\lambda_2}$ (see \cite[Lemma 3.4]{Godsil2012a}). However, if $\lvert\sigma_u(M)\rvert\geq 3$, then $q$ in Theorem \ref{minperiod}(2) satisfies $q>1$, and so $\rho>\frac{2\pi}{\lambda_1-\lambda_2}$. Thus, the inequality $\rho\geq \frac{2\pi}{\lambda_1-\lambda_2}$ is tight if and only if $\lvert\sigma_u(M)\rvert=2$. We also note that while $q$ depends on the choice of $\lambda_1$ and $\lambda_2$, the minimum period $\rho$ in Theorem \ref{minperiod} does not.

%
%

We say that a subset $W\subseteq V(X)$ is \textit{periodic} if each vertex in $W$ is periodic and there exists a time $\tau>0$ such that $\lvert U(\tau)_{u,u}\rvert=1$ for each $u\in W$. The minimum time such that $W$ is periodic is called the \textit{minimum period} $\rho$ of $W$. In particular, if $W=V(X)$, then we say that $X$ is periodic. As each vertex in $W$ is periodic, it follows that $\rho$ is an integer multiple of the minimum periods of the vertices in $W$. It is also immediate that if all vertices in $W$ have the same eigenvalue support, then $W$ is periodic if and only if one of its vertices is periodic, and as a consequence, each vertex has the same minimum period $\rho,$ which equals the minimum period of $W$.

Now, if $T$ is a set of twins in $X$, then Lemma \ref{aut} implies that the vertices in $T$ are pairwise cospectral, and thus, they all have the same eigenvalue support. Combining this with Lemma \ref{alphabeta} and \cite[Theorem 6.1]{Godsil2010} yields a characterization of periodic twin vertices whenever $\phi(M,t)\in \mathbb{Z}[x]$.

\begin{theorem}
\label{pertwin}
Let $\phi(M,t)\in \mathbb{Z}[x]$ and $T$ be a set of twins in $X$ with $\sigma_u(M)=\{\theta,\lambda_1,\ldots,\lambda_r\}$ for each $u\in T$. Then $T$ is periodic if and only if $\lambda_j=\theta+b_j\sqrt{\Delta}$ for each $j$, where $\theta$ is given in (\ref{adjalpha}), $b_j$ is an integer, and either $\Delta=1$ or $\Delta>1$ is a square-free integer. Moreover, if $T$ is periodic, then each $u\in T$ has minimum period $\rho=2\pi/g\sqrt{\Delta}$, where $g=\operatorname{gcd}(b_1,\ldots,b_r)$.
\end{theorem}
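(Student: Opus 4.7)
The plan is to reduce the statement about the whole set $T$ to a statement about a single vertex $u \in T$, and then apply the Ratio Condition (Theorem \ref{ratiocon}) together with Corollary \ref{minper}. Fix any $u \in T$. By Lemma \ref{aut}, every pair of vertices in $T$ is cospectral, so every vertex in $T$ has the same eigenvalue support; Proposition \ref{pereigensupp} then shows that $T$ is periodic if and only if $u$ is periodic and that, in that event, every vertex of $T$ has the same minimum period. Also, Lemma \ref{alphabeta} guarantees that $\theta \in \sigma_u(M)$, a fact that will be used repeatedly.

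For the ``if'' direction, suppose each $\lambda_j \in \sigma_u(M)$ can be written as $\theta + b_j \sqrt{\Delta}$ with $b_j \in \mathbb{Z}$ and $\Delta$ as described. Then for any $\lambda_p, \lambda_q, \lambda_r, \lambda_s \in \sigma_u(M)$ with $\lambda_r \neq \lambda_s$, one has $(\lambda_p - \lambda_q)/(\lambda_r - \lambda_s) = (b_p - b_q)/(b_r - b_s) \in \mathbb{Q}$, so Theorem \ref{ratiocon} forces $u$ to be periodic. For the ``only if'' direction, assume $u$ is periodic and invoke Theorem \ref{ratiocon}. Either $\sigma_u(M) \subseteq \mathbb{Z}$, in which case $\theta \in \mathbb{Z}$ and we take $\Delta = 1$ with $b_j := \lambda_j - \theta \in \mathbb{Z}$; or each $\lambda_j = \tfrac12(a + c_j\sqrt{\Delta})$ for a square-free integer $\Delta > 1$ and integers $a, c_j$. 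In the latter case, writing $\theta = \tfrac12(a + c_\theta \sqrt{\Delta})$ gives $\lambda_j - \theta = \tfrac12(c_j - c_\theta)\sqrt{\Delta}$; since Theorem \ref{ratiocon} further asserts that every eigenvalue difference is an integer multiple of $\sqrt{\Delta}$, the quantity $b_j := (c_j - c_\theta)/2$ is an integer, and $\lambda_j = \theta + b_j\sqrt{\Delta}$ as required.

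For the minimum period, apply Corollary \ref{minper} directly to $u$. In the parametrisation $\lambda_j = \theta + b_j \sqrt{\Delta}$ this yields minimum period $2\pi/(\tilde g \sqrt{\Delta})$ with $\tilde g = \gcd\{b_1 - b_j : j = 2, \ldots, r\}$. Because $\theta \in \sigma_u(M)$, the value $0$ appears among the $b_j$'s, and for any finite set of integers $\{b_1, \ldots, b_r\}$ containing $0$ a short argument shows $\gcd\{b_1 - b_j : j \geq 2\} = \gcd(b_1, b_2, \ldots, b_r)$. Hence $\rho = 2\pi/(g\sqrt{\Delta})$ with $g = \gcd(b_1, \ldots, b_r)$, and Proposition \ref{pereigensupp} propagates this minimum period to every vertex of $T$.

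The main technical point is translating between the parametrisation $\lambda_j = \tfrac12(a + c_j\sqrt{\Delta})$ of Theorem \ref{ratiocon} and the $\theta$-centred parametrisation $\lambda_j = \theta + b_j\sqrt{\Delta}$ of the statement: one must use both that $\theta$ lies in $\sigma_u(M)$ and that eigenvalue differences are integer multiples of $\sqrt{\Delta}$ to ensure that the shifted coefficients $b_j$ are integers (rather than half-integers). The reconciliation of the two gcd formulas, once $0$ is known to be among the $b_j$'s, is then routine.
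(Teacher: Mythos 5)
Your proof is correct and follows essentially the same route as the paper, which simply asserts that the result follows by combining Lemma \ref{alphabeta}, Proposition \ref{pereigensupp}, Theorem \ref{ratiocon} and Corollary \ref{minper}; you have supplied the details of that combination, including the translation from the $\frac{1}{2}(a+c_j\sqrt{\Delta})$ parametrisation to the $\theta$-centred one and the reconciliation of the two gcd expressions, both of which check out.
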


\begin{example}
Let $n\geq 2$ and consider the star $K_{1,n}\cong O_n\vee O_1$ with set of leaves $T=\{u_1,\ldots,u_n\}$. Then $T$ is a set of twins in $K_{1,n}$ and Lemma \ref{esupp11}(2b) yields $\sigma_{u_j}(A)=\{\theta,\pm\sqrt{n}\}$ for each $j$, where $\theta=0$. Moreover, Lemma \ref{esupp22}(2) gives us $\sigma_{u_j}(L)=\{1,0,n+1\}\subseteq\mathbb{Z}$. Invoking Theorem \ref{pertwin} and (\ref{LQ}), we conclude that $T$ is periodic with respect to $M\in\{A,L,Q\}$.
\end{example}

We now use Theorem \ref{pertwin} to characterize adjacency periodic twin vertices in simple unweighted joins of the form $X\vee Y$, where $X$ is either $K_m$ or $O_m$, and $Y$ is regular.

\begin{theorem}
\label{join11}
Let $m\geq 2$, $X\in\{K_m,O_m\}$, and $Y$ be an $\ell$-regular graph on $n\geq 1$ vertices. Consider $\lambda^{\pm}$ and $D$ in Lemma \ref{esupp11}, where $k=m-1$ whenever $X=K_m$ and $k=0$ whenever $X=O_m$. Suppose $Z=X\vee Y$, and let $T=V(X)$ and $S=V(Y)$. The following hold in $Z$.
\begin{enumerate}
\item If $X=K_m$ and $Y=K_n$, then $Z$ is adjacency periodic with $\rho=\frac{2\pi}{m+n}$.
\item Let $Y\neq K_n$. Then $T$ is adjacency periodic if and only if either
\begin{enumerate}
\item $X=O_m$ and $Y=O_n$, in which case $\rho=\frac{2\pi}{\sqrt{mn}}$, or
\item $D$ is a perfect square, in which case $\rho=\frac{2\pi}{g}$, where $g=\operatorname{gcd}(\lambda^{-}-\theta, \lambda^{+}-\theta)$, $\theta=-1$ if $X=K_m$, and $\theta=0$ if $X=O_m$.
\end{enumerate}
Moreover, if $Y$ is disconnected and $n\geq 2$, then $S$ is adjacency periodic if and only if $D$ is a perfect square and $Y$ is integral.
\end{enumerate}
\end{theorem}

\begin{proof}
Since 1 is clear, it suffices to show 2. Let $Y\neq K_n$, and suppose $u\in T$ and $w\in S$. By Lemma \ref{esupp11}(2), we have $\sigma_u(A)=\{\lambda^{\pm},\theta\}$, where $\theta=-1$ if $X=K_m$ and $\theta=0$ if $X=O_m$, and we can write $\lambda^{\pm}=\theta+\frac{1}{2}\left(k+\ell-2\theta\pm\sqrt{D}\right)$, where $k+\ell-2\theta\geq 0$ and $k=m-1$ whenever $X=K_m$ and $k=0$ whenever $X=O_m$. Invoking Theorem \ref{pertwin}, we conclude that $T$ is periodic if and only if either $k+\ell-2\theta=0$ or $D$ is a perfect square. The former is only possible if $k=\ell=\theta=0$, in which case $X=O_m$ and $Y=O_n$ so that $\sigma_u(A)=\{0,\pm\sqrt{mn}\}$ by Lemma \ref{esupp11}(2b), and thus, $\rho=\frac{2\pi}{\sqrt{mn}}$ by Theorem \ref{minperiod}(2). For the latter case, we get $\sigma_u(A)\subseteq \mathbb{Z}$, and so by Theorem \ref{ratiocon}, $T$ is periodic, and Theorem \ref{minperiod} gives us $\rho=\frac{2\pi}{g}$, where $g=\operatorname{gcd}(\lambda^{-}-\theta, \lambda^{+}-\theta)$. Finally, if $Y$ is disconnected and $n\geq 2$, then $\ell,\lambda^{\pm}\in\sigma_w(A)$ by Lemma \ref{esupp11}(2), and so Theorem \ref{ratiocon} implies that $S$ is adjacency periodic if and only if $D$ is a perfect square and $Y$ is integral. We note that this result about periodicity of $S$ in $Z$ holds for any regular graph $X$, and not just for $X\in\{K_m,O_m\}$.
\end{proof}

For the Laplacian case, the following is a direct consequence of Lemma \ref{esupp22}.

\begin{theorem}
\label{join22}
Let $m\geq 2$, $X\in\{K_m,O_m\}$, and $Y$ be a graph on $n\geq 1$ vertices. Consider $Z=X\vee Y$, and let $T=V(X)$ and $S=V(Y)$. The following hold in $Z$.
\begin{enumerate}
\item $T$ (resp., $S$ whenever $n\geq 2$) is Laplacian periodic with $\rho=\frac{2\pi}{m+n}$ whenever $X=K_m$ (resp., $Y=K_n$), while $\rho=\frac{2\pi}{g}$ whenever $X=O_m$ (resp., $Y=O_n$), where $g=\operatorname{gcd}(m,n)$. Moreover, if $n=1$, then $S$ is Laplacian periodic with $\rho=\frac{2\pi}{m+1}$.
\item Let $Y\notin\{K_n,O_n\}$. Then $S$ is Laplacian periodic if and only if $Y$ is Laplacian integral.
\end{enumerate}
\end{theorem}

Lastly, we deal with the signless Laplacian matrix.

\begin{theorem}
\label{join33}
Let $X$ be either $K_m$ or $O_m$ with $m\geq 2$, and $Y$ be an $\ell$-regular graph on $n\geq 1$ vertices. Consider $\lambda^{\pm}$ and $D$ in Theorem \ref{qjoin}, where $k=m-1$ whenever $X=K_m$ and $k=0$ whenever $X=O_m$. Suppose $Z=X\vee Y$ where $Y\neq K_n$, and let $T=V(X)$ and $S=V(Y)$. The following hold in $Z$.
\begin{enumerate}
\item If $X=K_m$, then $T$ is signless Laplacian periodic if and only if either $n=m+2\ell+2$ or $D$ is a perfect square.
\item If $X=O_m$, then $T$ is signless Laplacian periodic if and only if either $\ell=0$, $n=2\ell+m$ or $D$ is a perfect square.
\item If $Y$ is disconnected and $n\geq 2$, then $S$ is signless Laplacian periodic if and only if $D$ is a perfect square and $Y$ is signless Laplacian integral.
\end{enumerate}
Moreover, the minimum period of $T$ is $\rho=\frac{2\pi q}{\sqrt{D}}$, where $p$ and $q$ are integers with $\operatorname{gcd}(p,q)=1$ such that $\frac{\lambda^+-\theta}{\lambda^+-\lambda^-}=\frac{p}{q}$, and $\theta=m+n-2$ if $X=K_m$ and $\theta=n$ if $X=O_m$.
\end{theorem}

\begin{proof}
Since the case $X=K_m$ and $Y=K_n$ is equivalent to Theorem \ref{join11}(1), it suffices to assume that $Y$ is not complete. Let $u\in T$ and $w\in S$. From Corollary \ref{esupp33}(2), we have $\sigma_u(Q)=\{\lambda^{\pm},m+n-2\}$ whenever $X=K_m$, while $\sigma_u(Q)=\{\lambda^{\pm},n\}$ whenever $X=O_m$. If $X=K_m$, then $\lambda^{\pm}=\theta+\frac{1}{2}(m+2\ell-n+2+\sqrt{D})$, where $\theta=m+n-2$. On the other hand, if $X=O_m$, then we can write $\lambda^{\pm}=\theta+\frac{1}{2}(2\ell-n+m\pm \sqrt{D})$, where $\theta=n$. Invoking Theorem \ref{pertwin} and the fact that the case $\ell=0$ whenever $X=O_m$ is equivalent to Theorem \ref{join11}(2) proves 1 and 2, and Theorem \ref{minperiod}(2) yields the corresponding minimum periods. To prove 3, we note from Corollary \ref{esupp33}(2) that $2k\in\sigma_w(Q)$. By Theorem \ref{ratiocon}, $S$ is periodic if and only if $D$ is a perfect square and $Y$ is signless Laplacian integral. Again, we note that this result about periodicity of $S$ in $Z$ holds for any regular graph $X$, and not just for $X\in\{K_m,O_m\}$.
\end{proof}

Theorems \ref{join11}, \ref{join22}, and \ref{join33} provide a plethora of join graphs that exhibit periodicity with respect to their adjacency, Laplacian, or signless Laplacian matrix, respectively.

\section{Perfect state transfer}\label{secPST}

We first state an important observation due to Dave Morris \cite[Lemma 13.1]{Godsil2012a}.

\begin{lemma}
\label{pgstsc}
If pretty good state transfer occurs between $u$ and $v$, then $u$ and $v$ are strongly cospectral.
\end{lemma}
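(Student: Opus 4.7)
The plan is to exploit the spectral decomposition together with compactness (passing to a subsequence) to show that, in the PGST limit, the vectors $E_j\be_u$ and $E_j\be_v$ must be parallel with equal norms, and then use reality to upgrade parallelism to a $\pm 1$ scalar.

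The setup: PGST from $u$ to $v$ means there exists a sequence of times $\{\tau_n\}$ such that $|U(\tau_n)_{u,v}|\to 1$. Since $U(\tau_n)$ is unitary, $\|U(\tau_n)\be_u\|=1$, so by Cauchy--Schwarz $|\be_v^* U(\tau_n)\be_u|\le 1$ with equality only if $U(\tau_n)\be_u$ is a scalar multiple of $\be_v$. Writing $U(\tau_n)_{u,v}=r_n e^{i\psi_n}$ with $r_n\to 1$, and passing to a subsequence so that $e^{i\psi_n}\to\gamma$ for some $\gamma$ of modulus one, I would conclude
\begin{equation*}
U(\tau_n)\be_u \longrightarrow \gamma\,\be_v
\end{equation*}
in $\mathbb{C}^n$. (The off-diagonal entries of $U(\tau_n)\be_u$ indexed by vertices other than $v$ must go to zero because the sum of squares of all moduli equals one.)

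Next, using the spectral decomposition $U(\tau_n)=\sum_j e^{i\tau_n\lambda_j}E_j$, I would apply each spectral idempotent $E_j$ to both sides. This gives $e^{i\tau_n\lambda_j}E_j\be_u \to \gamma E_j\be_v$. Because $|e^{i\tau_n\lambda_j}|=1$ and there are only finitely many eigenvalues, a standard diagonal-subsequence argument lets me refine $\{\tau_n\}$ so that $e^{i\tau_n\lambda_j}\to \zeta_j$ for every $j$ simultaneously, with $|\zeta_j|=1$. Consequently
\begin{equation*}
\zeta_j\, E_j\be_u = \gamma\, E_j\be_v \qquad \text{for every } j.
\end{equation*}
In particular, $E_j\be_u=\mathbf{0}$ forces $E_j\be_v=\mathbf{0}$, so $\sigma_v(M)\subseteq\sigma_u(M)$, and by the symmetric role of $u$ and $v$ we get $\sigma_u(M)=\sigma_v(M)$.

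Finally, for $\lambda_j\in\sigma_u(M)$ the identity above says $E_j\be_v=(\zeta_j/\gamma)E_j\be_u$, i.e.\ $E_j\be_u$ and $E_j\be_v$ are parallel and, since both vectors have unit coefficient absolute value $|\zeta_j/\gamma|=1$, they have equal norms. Because $M$ is real symmetric each $E_j$ is a real matrix, so $E_j\be_u$ and $E_j\be_v$ are real vectors; a complex scalar relating two nonzero real parallel vectors must itself be real, and combined with the equal-norm condition this forces $\zeta_j/\gamma=\pm 1$. Therefore $E_j\be_u=\pm E_j\be_v$ for every $j$, which is precisely strong cospectrality.

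The only delicate point is the compactness step: one must extract a common subsequence along which the $|\sigma(M)|$ phases $e^{i\tau_n\lambda_j}$ and the overall phase $e^{i\psi_n}$ all converge. Since there are finitely many eigenvalues and the unit circle is compact, this is routine, and the rest of the argument is linear-algebraic. I do not foresee any deeper obstacle.
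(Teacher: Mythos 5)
Your argument is correct: the compactness/subsequence extraction, the identity $\zeta_j E_j\be_u=\gamma E_j\be_v$ obtained by hitting the limit with each spectral idempotent, and the reality-of-$E_j$ step that upgrades the unimodular scalar to $\pm 1$ are all sound. Note that the paper offers no proof of this lemma at all---it is quoted verbatim from Godsil \cite[Lemma~13.1]{Godsil2012a}---and your proof is essentially the standard argument given there, so there is nothing to reconcile.
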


It is well-known that two vertices are strongly cospectral if and only if they are cospectral and parallel. Since twin vertices are cospectral by Lemma \ref{aut}, it follows that twin vertices are strongly cospectral if and only if they are parallel. Next, we state the Corollaries 3.10 and 3.14 in \cite{Monterde2021} respectively, which will prove useful in this section.

\begin{lemma}
\label{3tw}
Let $T$ be a set of twins in $X$. If $\lvert T\rvert \geq 3$, then each vertex $v\in T$ is not parallel, and hence not strongly cospectral, with any vertex $z\neq u$.
\end{lemma}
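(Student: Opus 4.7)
The plan is to work inside the $\theta$-eigenspace of $M$, where $\theta$ is the eigenvalue supplied by Lemma \ref{alphabeta}, and exploit the twin-swap automorphisms furnished by Lemma \ref{aut}. Write $E=E_\theta$ for the orthogonal projection onto the $\theta$-eigenspace. Since $u\in T$, Lemma \ref{alphabeta} ensures $\theta\in\sigma_u(M)$ and, crucially, for any two distinct vertices $t,t'\in T$ the vector $\be_t-\be_{t'}$ is a $\theta$-eigenvector, so $E(\be_t-\be_{t'})=\be_t-\be_{t'}$.

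Fix $u\in T$ and any $v\in V(X)\setminus\{u\}$. Because $|T|\geq 3$, I can choose $w\in T\setminus\{u,v\}$; this is the one and only place the hypothesis $|T|\geq 3$ is used. By Lemma \ref{aut}, there is an involution $\sigma$ of $X$ that swaps $u$ and $w$ and fixes every other vertex, in particular fixing $v$. Let $P$ be the permutation matrix of $\sigma$. Since $\sigma$ is an automorphism of $X$, $PM=MP$, so $P$ commutes with every spectral projection of $M$, and in particular with $E$.

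Now suppose, for the sake of contradiction, that $u$ and $v$ are parallel. Taking the eigenvalue $\theta$ in the definition of parallelism yields $E\be_u=cE\be_v$ for some scalar $c$. Applying $P$ to both sides and using $PE=EP$ together with $P\be_u=\be_w$ and $P\be_v=\be_v$, I obtain
\begin{equation*}
E\be_w=PE\be_u=cPE\be_v=cE\be_v=E\be_u.
\end{equation*}
On the other hand, $E\be_u-E\be_w=E(\be_u-\be_w)=\be_u-\be_w\neq\textbf{0}$, a contradiction. Hence no such $c$ exists and $u$ is not parallel to $v$. Since strong cospectrality is the special case $c=\pm 1$ of parallelism, it follows that $u$ and $v$ are not strongly cospectral either.

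I do not anticipate a serious obstacle here: the only subtlety is recognizing that a single, uniform automorphism argument handles both the case $v\in T$ and the case $v\notin T$, which is precisely why $|T|\geq 3$ is needed, namely to guarantee a third twin $w$ distinct from both $u$ and $v$ to serve as the swap partner for $u$.
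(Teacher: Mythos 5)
Your argument is correct. Note, however, that the paper does not actually prove this lemma: it is imported verbatim from \cite{Monterde2021} (stated there as Corollary 5), so there is no in-paper proof to compare against. Your derivation is a clean, self-contained justification using only the tools the paper has already set up: Lemma \ref{alphabeta} gives $E_\theta(\be_t-\be_{t'})=\be_t-\be_{t'}$ for distinct $t,t'\in T$, Lemma \ref{aut} supplies the involution swapping $u$ with a third twin $w\in T\setminus\{u,v\}$ while fixing $v$, and commutation of the permutation matrix with $E_\theta$ turns the hypothesized relation $E_\theta\be_u=cE_\theta\be_v$ into $E_\theta\be_u=E_\theta\be_w$, contradicting $E_\theta(\be_u-\be_w)=\be_u-\be_w\neq\mathbf{0}$. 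Two small points worth making explicit: failure of parallelism at the single idempotent $E_\theta$ already negates the definition (which quantifies over all $j$), so restricting attention to $E_\theta$ is legitimate; and your argument also covers the degenerate case $E_\theta\be_v=\mathbf{0}$, since it still forces $E_\theta\be_u=E_\theta\be_w$. You also correctly identify that the sole role of $|T|\geq 3$ is to guarantee a swap partner $w$ distinct from both $u$ and $v$, which uniformly handles $v\in T$ and $v\notin T$.
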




\begin{lemma}
\label{strcospchar}
Let $T=\{u,v\}$ be a set of twins in $X$, and consider $\theta$ in (\ref{adjalpha}). If $\Omega$ is an orthogonal set of eigenvectors for $\theta$ such that $\textbf{e}_u-\textbf{e}_v\in \Omega$, then $u$ and $v$ are strongly cospectral if and only if (i) $\lvert\Omega\rvert=1$ or (ii) $\textbf{w}^T\textbf{e}_u=\textbf{w}^T\textbf{e}_v=0$ for all $\textbf{w}\in\Omega\backslash\{\textbf{e}_u-\textbf{e}_v\}$. Moreover, if $u$ and $v$ are strongly cospectral, then $\sigma_{uv}^-(M)=\{\theta\}$, and $u$ and $v$ cannot be strongly cospectral to any $w\in V(X)\backslash\{u,v\}$.
\end{lemma}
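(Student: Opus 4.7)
The plan is to localize the strong-cospectrality question to the eigenspace of $\theta$. By Lemma \ref{alphabeta}, $\be_u-\be_v$ is an eigenvector of $M$ for the eigenvalue $\theta$. Since eigenspaces for distinct eigenvalues of a real symmetric matrix are orthogonal, every eigenvector $\textbf{w}$ associated with any $\lambda_j\neq\theta$ satisfies $\textbf{w}^T(\be_u-\be_v)=0$, i.e.\ $\textbf{w}^T\be_u=\textbf{w}^T\be_v$. Summing over an orthonormal basis of the $\lambda_j$-eigenspace, this gives $E_j\be_u=E_j\be_v$ whenever $\lambda_j\neq\theta$. Consequently, the condition $E_j\be_u=\pm E_j\be_v$ at every $j$ is automatic off of $\theta$, and the entire problem reduces to analyzing $E_\theta\be_u$ and $E_\theta\be_v$.

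Next I would write the spectral projector explicitly as
\[
E_\theta=\sum_{\textbf{w}\in\Omega}\frac{\textbf{w}\textbf{w}^T}{\|\textbf{w}\|^2},
\]
so that
\[
E_\theta\be_u=\tfrac12(\be_u-\be_v)+\!\!\sum_{\textbf{w}\in\Omega\setminus\{\be_u-\be_v\}}\!\!\frac{\textbf{w}^T\be_u}{\|\textbf{w}\|^2}\textbf{w},\qquad
E_\theta\be_v=-\tfrac12(\be_u-\be_v)+\!\!\sum_{\textbf{w}\in\Omega\setminus\{\be_u-\be_v\}}\!\!\frac{\textbf{w}^T\be_v}{\|\textbf{w}\|^2}\textbf{w},
\]
using that $(\be_u-\be_v)^T\be_u=1$ and $(\be_u-\be_v)^T\be_v=-1$. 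Comparing the coefficients of $\be_u-\be_v$ (which are $+\tfrac12$ and $-\tfrac12$) immediately rules out $E_\theta\be_u=E_\theta\be_v$, so the only way to achieve strong cospectrality is $E_\theta\be_u=-E_\theta\be_v$, and this already proves the ``moreover'' statement that $\theta\in\sigma_u^-(M)$ and, together with the first paragraph, that $\sigma_u^-(M)=\{\theta\}$.

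Equating $E_\theta\be_u$ with $-E_\theta\be_v$ and using linear independence of $\Omega$ forces $\textbf{w}^T\be_u=-\textbf{w}^T\be_v$ for each $\textbf{w}\in\Omega\setminus\{\be_u-\be_v\}$; combined with $\textbf{w}^T\be_u=\textbf{w}^T\be_v$ (from orthogonality to $\be_u-\be_v$), this yields $\textbf{w}^T\be_u=\textbf{w}^T\be_v=0$, i.e.\ condition (ii). If instead $|\Omega|=1$, there are no such $\textbf{w}$ and the identity $E_\theta\be_u=-E_\theta\be_v=\tfrac12(\be_u-\be_v)$ is trivial. The converse runs in the same direction: under (i) or (ii), all sums over $\Omega\setminus\{\be_u-\be_v\}$ vanish, leaving $E_\theta\be_u=-E_\theta\be_v$, and combining with the first paragraph gives strong cospectrality.

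For the last assertion I would exploit the clean formula $E_\theta\be_u=\tfrac12(\be_u-\be_v)$ obtained under strong cospectrality. Suppose for contradiction that some $w\in V(X)\setminus\{u,v\}$ is strongly cospectral with $u$. Then $E_\theta\be_w=\pm E_\theta\be_u=\pm\tfrac12(\be_u-\be_v)$, whose $w$-entry is $0$. On the other hand $(E_\theta)_{w,w}=\be_w^TE_\theta\be_w=\|E_\theta\be_w\|^2$, so $(E_\theta)_{w,w}=0$ forces $E_\theta\be_w=\textbf{0}$, contradicting $E_\theta\be_w=\pm\tfrac12(\be_u-\be_v)\neq\textbf{0}$. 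The same argument applies to $v$ in place of $u$. The only mildly delicate step is the bookkeeping at $\theta$; once the orthogonality reduction is made the rest is linear algebra.
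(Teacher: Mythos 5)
Your argument is correct and complete. Note that the paper itself does not prove this lemma --- it imports it from \cite{Monterde2021} (Corollaries 5 and 6) --- so there is no internal proof to compare against; judged on its own, your localization to the $\theta$-eigenspace, the explicit expansion of $E_\theta\be_u$ and $E_\theta\be_v$ over the orthogonal basis, and the $(E_\theta)_{w,w}=\norm{E_\theta\be_w}^2$ trick for the final non-cospectrality claim are all sound and match the standard route to this result. The only point worth flagging is that your formula $E_\theta=\sum_{\textbf{w}\in\Omega}\textbf{w}\textbf{w}^T/\norm{\textbf{w}}^2$ requires $\Omega$ to be an orthogonal \emph{basis} of the full $\theta$-eigenspace, not merely an orthogonal set of eigenvectors as the lemma literally says; this is clearly the intended reading (otherwise case (i) would be false), but it deserves a sentence making the assumption explicit.
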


If $u$ and $v$ are are strongly cospectral, then $\lvert\sigma_u(M)\rvert\geq 3$ \cite[Theorem 3.4]{Monterde2021}. Thus, if $u$ and $v$ are twins that are strongly cospectral, then Lemma \ref{strcospchar} yields $\lvert \sigma_{uv}^+(M)\rvert\geq 2$ and $\lvert \sigma_{uv}^-(M)\rvert=1$. A related result of Coutinho and Liu says that if $u$ and $v$ are strongly cospectral and $\lvert \sigma_{uv}^-(M)\rvert=1$, then $u$ and $v$ are twins \cite[Lemma 3.1]{Coutinho2014a}. Thus, if $u$ and $v$ are strongly cospectral with respect to $M$, then $u$ and $v$ are twins if and only if $\lvert \sigma_{uv}^-(M)\rvert=1$.

It is known that PST is monogamous \cite{Kay2011}. However, PGST is not, as shown by the Cartesian product of $P_2$ and $P_3$ provided by Pal and Bhattacharjya in \cite[Example 4.1]{PAL2017746} which exhibits pairwise adjacency PGST between four vertices. For the weighted case, Johnston et al.\ provided a graph that exhibits Laplacian PGST from one vertex to three distinct vertices \cite[Example 2]{Johnston2017}. However, from Lemma \ref{3tw}, if PGST occurs between vertices in a set of twins $T$, then $\lvert T\rvert=2$. Combining this with Lemma \ref{strcospchar}, we conclude that a vertex $u$ with a twin in $X$ can only pair up with at most one vertex $v$ to exhibit PGST. That is, PGST is monogamous when it involves a vertex with a twin.

Making use of Lemma \ref{strcospchar} and a characterization of PST by Coutinho \cite[Theorem 2.4.4]{Coutinho2014}, we obtain the following characterization of PST between twin vertices.

\begin{theorem}
\label{genminpst}
Let $T=\{u,v\}$ be a set of twins in $X$ and suppose $\sigma_u(M)=\{\theta,\lambda_1,\ldots,\lambda_r\}$, where $\theta$ is given in (\ref{adjalpha}). Then perfect state transfer occurs between $u$ and $v$ if and only if
\begin{enumerate}
\item $E_{\theta}\textbf{e}_u=-E_{\theta}\textbf{e}_v$, $E_j\textbf{e}_u=E_j\textbf{e}_v$ $j=1,\ldots,r$; and
\item there exists a time $\tau$ such that for each $j=1,\ldots,r$, an odd $m_j$ exists such that
\begin{equation}
\label{haha1}
\tau(\lambda_j-\theta)=m_j\pi.
\end{equation}
\end{enumerate}
In addition, the minimum time that perfect state transfer occurs between $u$ and $v$ is $\tau=\frac{\pi q}{\lambda_1-\lambda_2}$, where $q$ is an integer given in Theorem \ref{minperiod}.
\end{theorem}

Denote the largest power of two that divides an integer $b$ by $\nu_2(b)$. With the assumption in Theorem \ref{genminpst}, we further suppose that $\phi(M,t)\in \mathbb{Z}[x]$. Applying the well-known characterization of PST due to Coutinho \cite[Theorem 2.4.4]{Coutinho2014} for the case that $\phi(M,t)\in \mathbb{Z}[x]$, we obtain the a characterization of perfect state transfer between twin vertices whenever $\phi(M,t)\in \mathbb{Z}[x]$.

\begin{theorem}
\label{pstchartw}
Let $\phi(M,t)\in\mathbb{Z}[x]$ and $T=\{u,v\}$ be a set of twins in $X$ with $\sigma_u(M)=\{\theta,\lambda_1,\ldots,\lambda_r\}$, where $\theta$ is given in (\ref{adjalpha}). Then perfect state transfer occurs between $u$ and $v$ if and only if the following conditions holds.
\begin{enumerate}
\item $u$ and $v$ are strongly cospectral with $\sigma_{uv}^+(M)=\{\lambda_1,\ldots,\lambda_r\}$ and $\sigma_{uv}^-(M)=\{\theta\}$.
\item For each $j$, $\lambda_j=\theta+b_j\sqrt{\Delta}$, where $b_j$ is an integer, and $\Delta=1$ or $\Delta>1$ is a square-free integer.
\item For each $j$, $\nu_2(b_j)=q$, where $q$ is a nonnegative integer.
\end{enumerate}
In addition, if perfect state transfer occurs between $u$ and $v$, then the minimum PST time  is $\tau=\frac{\pi}{g\sqrt{\Delta}}$, where $g=\operatorname{gcd}(b_1,\ldots,b_r)$.
\end{theorem}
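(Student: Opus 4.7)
The plan is to translate perfect state transfer into equation~(\ref{haha1}) from Theorem~\ref{genminpst}, then exploit the integrality hypothesis via Theorem~\ref{pertwin} to reduce the problem to a $2$-adic parity condition on the coefficients $b_j$.

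For necessity, suppose PST occurs between $u$ and $v$. Since PST is a special case of PGST, Theorem~\ref{pgstfrtw} yields that $u$ and $v$ are strongly cospectral, and Lemma~\ref{3tw} forces $|T|=2$. Applying Lemma~\ref{strcospchar} together with the observation that $\theta\in\sigma_u(M)$ (from Lemma~\ref{alphabeta}) gives $\sigma_u^-(M)=\{\theta\}$ and $\sigma_u^+(M)=\{\lambda_1,\ldots,\lambda_r\}$, proving~(i). Since PST at time $\tau$ implies periodicity at time $2\tau$, Theorem~\ref{pertwin} combined with $\phi(M,t)\in\mathbb{Z}[x]$ yields the form $\lambda_j=\theta+b_j\sqrt{\Delta}$ in~(ii). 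For~(iii), Theorem~\ref{genminpst} guarantees odd integers $m_j$ with $\tau b_j\sqrt{\Delta}=m_j\pi$ for every $j$. Writing $b_j=2^{\nu_2(b_j)}\ell_j$ with $\ell_j$ odd, eliminating $\tau$ between any two indices $j,r$ gives $m_j\ell_r = 2^{\nu_2(b_j)-\nu_2(b_r)}\,m_r\ell_j$; if $\nu_2(b_j)\neq\nu_2(b_r)$, one side is odd while the other is even, a contradiction, so all $\nu_2(b_j)$ agree.

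For sufficiency, assume (i)--(iii) hold and set $\tau_0=\pi/(2^q g'\sqrt{\Delta})$, where $g'=\gcd(\ell_1,\ldots,\ell_r)$ and $\ell_j=b_j/2^q$. Each $\ell_j$ is odd, hence $g'$ is odd and each $h_j:=\ell_j/g'$ is an odd integer. Then $\tau_0(\lambda_j-\theta)=\pi h_j$, so $e^{i\tau_0(\lambda_j-\theta)}=-1$ for every $j$, and Theorem~\ref{genminpst} yields PST at time $\tau_0$. For minimality, suppose $\tau'$ is a PST time with $0<\tau'\le \tau_0$. Since every period of $u$ is an integer multiple of the minimum period and the PST times form an arithmetic progression of odd multiples of the minimum PST time, we have $\tau_0=s\tau'$ for some odd positive integer $s$. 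Substituting into equation~(\ref{haha1}) at the candidate time $\tau'$ gives $\ell_j = s m_j g'$ for each $j$, so $s$ divides every $h_j$. Since $\gcd(h_1,\ldots,h_r)=1$ by construction of $g'$, we conclude $s=1$ and $\tau'=\tau_0$.

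The main obstacle I anticipate is the $2$-adic parity argument: the fact that the simultaneous equations $\tau(\lambda_j-\theta)=m_j\pi$ with every $m_j$ odd force all $\nu_2(b_j)$ to be equal is the crux of the necessity direction, and dually, the equality of these $2$-adic valuations is exactly what allows the single time $\tau_0$ to produce $-1$ simultaneously across all $j$. Once this structural observation is isolated, the sufficiency calculation and the minimality argument reduce to standard gcd manipulations, and the explicit form $\tau=\pi/(2^q g'\sqrt{\Delta})$ drops out immediately from the construction.
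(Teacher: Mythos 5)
Your proposal is correct and follows essentially the same route as the paper: both reduce PST to the condition $\tau(\lambda_j-\theta)=m_j\pi$ with $m_j$ odd via Theorem \ref{genminpst}, invoke Theorem \ref{pertwin} for the form $\lambda_j=\theta+b_j\sqrt{\Delta}$, use the same odd/even contradiction $m_j\ell_r=2^{\nu_2(b_j)-\nu_2(b_r)}m_r\ell_j$ to force equal $2$-adic valuations, and establish minimality of $\tau_0=\pi/(2^qg'\sqrt{\Delta})$ by the same gcd argument. The only difference is organizational (necessity/sufficiency versus the paper's two-case analysis), not mathematical.
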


In Theorem \ref{pstchartw}, conditions (1) and (2) respectively reflect the fact that strong cospectrality and periodicity are necessary conditions for PST. To check strong cospectrality between twins, one may use Lemma \ref{strcospchar}. We also note that the minimum PST time in Theorem \ref{pstchartw} is indeed half of the minimum period indicated in Theorem \ref{pertwin}. Lastly, we remark that Theorem \ref{pstchartw} can be proven using Theorems \ref{pertwin} and \ref{genminpst}.

We illustrate Theorem~\ref{pstchartw} using the cocktail party graph $\overline{mK_2}$ as an example.
 
\begin{example}
Note that $\overline{mK_2}$ contains $m$ pairs of false twins and is $(2m-2)$-regular. The eigenvalues of $A$ are $2m-2$, $\theta=0$ (multiplicity $m$), and $-2$ (multiplicity $m-1$). One checks that any pair of false twins in $\overline{mK_2}$ are strongly cospectral with eigenvalue support containing all three eigenvalues of $A$. By Theorem \ref{pertwin}, every vertex of $\overline{mK_2}$ is periodic with minimum period $\rho=\pi$. Moreover, since the largest power of two that divides $2(m-1)$ and $-2$ are equal if and only if $m-1$ is odd, Theorem \ref{pstchartw} yields PST between any pair of false twins in $\overline{mK_2}$ if and only if $m$ is even, in which case the minimum PST time is $\tau=\pi/2$.
These observations also apply to the Laplacian and signless Laplacian case because $\overline{mK_2}$ is regular.
\end{example}

Let $X\in\{O_m,K_m\}$ and $Y$ be a graph on $n\geq 1$ vertices. In the simple unweighted join $X\vee Y$, the vertices in $X$ form a set of twins $T$ in $X\vee Y$. If $m\geq 3$, then Corollary \ref{nopstpgsttw} implies that any vertex in $X$ cannot be involved in PGST with any other vertex in $X\vee Y$. This motivates us to look at the case $m=2$. Let $m=2$ and $V(X)=\{u,v\}$. The join $X\vee Y$ is called a \textit{double cone} on $Y$ with apexes $u$ and $v$. In particular, if $X=K_2$, then we call $X\vee Y$ a \textit{connected double cone} on $Y$. Otherwise, $X\vee Y$ is a \textit{disconnected double cone} on $Y$. Earlier work by Angeles-Canul et al.\ provides partial results for adjacency PST in double cones on regular graphs \cite[Corollaries 13, 15]{Angeles-Canul2010}. 

Here, we go a step further by providing a complete characterization of double cones on regular graphs which exhibit adjacency and signless Laplacian PST. This characterization highlights infinite families of graphs having adjacency or signless Laplacian PST.  We begin with the disconnected case.

\begin{theorem}
\label{discdc}
Let $Y$ be a $\ell$-regular graph on $n\geq 1$ vertices. The following hold.
\begin{enumerate}
\item Adjacency perfect state transfer occurs between the apexes of $O_2\vee Y$ if and only if either (i) $\ell=0$ or (ii) $\ell>0$, $n=\frac{1}{2}s(\ell+s)$ for some integer $s$, and $\nu_2(\ell)>\nu_2(s)\geq 1$. Moreover, if $\ell=0$, then the minimum time that perfect state transfer occurs is $\tau=\frac{\pi}{\sqrt{2n}}$. Otherwise, it is $\tau=\frac{\pi}{g}$, where $g=\operatorname{gcd}(\ell+s,s)$.
\item Signless Laplacian perfect state transfer occurs between the apexes of $O_2\vee Y$ if and only if one of the following conditions holds
\begin{enumerate}
\item $\ell=0$ and $n\equiv 2$ (mod 4);
\item $\ell>0$ and $n=2\ell+2$; or
\item $\ell>0$, $n=\frac{s(2\ell-s+2)}{2\ell-s}$ for some integer $s$, and either (i) $\nu_2(s)>1$, $\ell$ is even and $\nu_2(n)=1$ or (ii) $\ell$ is odd, $\nu_2(n)>\nu_2(s)$ and $\nu_2(\ell+1)>\nu_2(s)-1$.
\end{enumerate}
Moreover, the minimum time that perfect state transfer occurs is $\tau=\frac{\pi}{2}$ whenever $\ell=0$, and $\tau=\frac{\pi}{\sqrt{2n}}$ whenever $\ell>0$ and $n=2\ell+2$. Otherwise, it is $\tau=\frac{\pi}{g}$, where $g=\operatorname{gcd}(2\ell-s+2,n-s)$.
\end{enumerate}
\end{theorem}

\begin{proof}
Let $u$ and $v$ be the apexes of $O_2\vee Y$. By virtue of \cite[Corollary 6.9(1)]{Monterde2021}, $u$ and $v$ are strongly cospectral both with respect to $A$ and $Q$. Making use of Lemmas \ref{alphabeta} and \ref{strcospchar}, we get that $\sigma_{uv}^-(A)=\{0\}$ and $\sigma_{uv}^-(Q)=\{n\}$. We divide our discussion into two cases: the adjacency case and the signless Laplacian case.

We begin by proving the first statement. By Lemma \ref{esupp11}(2), we get $\sigma_u(A)=\{\lambda^{\pm},0\}$, where $\lambda^{\pm}=\frac{1}{2}\left(\ell\pm\sqrt{\ell^2+8n}\right)$. If $\ell=0$, then $\sigma_u(A)=\left\{\pm\sqrt{2n},0\right\}$, where $\nu_2(\sqrt{2n})=\nu_2(-\sqrt{2n})$. By Theorem \ref{pstchartw}, PST occurs between $u$ and $v$ with minimum time $\tau=\frac{\pi}{\sqrt{2n}}$, and so (i) holds. Now, let $\ell>0$. By Theorem \ref{pstchartw}, adjacency PST occurs between $u$ and $v$ if and only if $\ell^2+8n$ is a perfect square and
\begin{equation}
\label{Anu1}
\nu_2(\ell+\sqrt{\ell^2+8n})=\nu_2(\ell-\sqrt{\ell^2+8n}).
\end{equation}
Now, $\ell^2+8n$ is a perfect square if and only if $8n=4s(\ell+s)$ for some integer $s$ such that $s(\ell+s)$ is even. Making use of (\ref{Anu1}), we get $\nu_2(\ell+s)=\nu_2(s)$ which implies that $\nu_2(\ell)>\nu_2(s)$. As $\nu_2(s)\geq 0$ and $\nu_2(n)=\nu_2\left(\frac{s(\ell+s)}{2}\right)=\nu_2(s)+\nu_2(\ell+s)-1=2\nu_2(s)-1\geq 0$, we obtain $\nu(s)\geq 1$. Therefore, (ii) is true. Lasly, invoking Theorem \ref{pstchartw} yields the minimum PST time $\tau=\frac{\pi}{2g}$, where $g=\operatorname{gcd}(\ell+\sqrt{\ell^2+8n},\ell-\sqrt{\ell^2+8n})$.

Next, we show the second statement. By Corollary \ref{esupp33}(2b), we get $\sigma_u(Q)=\{\lambda^{\pm},\theta\}$, where $\lambda^{\pm}=\frac{1}{2}(2\ell+n+2\pm\sqrt{(2\ell+n+2)^2-8\ell n})$ and $\theta=n$ in Theorem \ref{pstchartw}. We have the following cases.
\begin{itemize}
\item \small{Let $\ell=0$ so that $\sigma_u(Q)=\{n,n+2,0\}$. Since we can write $n+2=\theta+2$ and $0=\theta+(-n)$, we obtain $\nu_2(-n)=\nu_2(2)=1$ if and only if $n=2c$ for some odd $c$, or equivalently, $n\equiv 2$ (mod 4). Applying Theorem \ref{pstchartw}, we get signless Laplacian PST between $u$ and $v$ if and only if $n\equiv 2$ (mod 4), in which case the minimum PST time is $\tau=\frac{\pi}{2}$.}
\item Let $\ell>0$ so that we can write $\lambda^{\pm}=\theta+\frac{1}{2}\left(2\ell-n+2\pm\sqrt{(2\ell+n+2)^2-8\ell n}\right)$. Invoking Theorems \ref{join33}(2) and \ref{pstchartw}(2), we need either $n=2\ell+2$ or $(2\ell+n+2)^2-8\ell n$ is a perfect square. If $n=2\ell+2$, then $\lambda^{\pm}=\theta\pm 2\sqrt{\ell+1}$, and hence, Theorem \ref{pstchartw} yields signless Laplacian PST between $u$ and $v$ with minimum time $\tau=\frac{\pi}{2\sqrt{\ell+1}}=\frac{\pi}{\sqrt{2n}}$. Now, suppose $(2\ell+n+2)^2-8\ell n$ is a perfect square. By Theorem \ref{pstchartw}(3), signless Laplacian PST occurs between $u$ and $v$ if and only if 
\begin{equation}
\label{nu2}
\nu_2\left(2\ell-n+2+\sqrt{(2\ell+n+2)^2-8\ell n}\right)=\nu_2\left(2\ell-n+2-\sqrt{(2\ell+n+2)^2-8\ell n}\right).
\end{equation}
Using conjugation, (\ref{nu2}) yields $\nu_2\left((2\ell-n+2\pm \sqrt{(2\ell+n+2)^2-8\ell n})^2\right)=\nu_2(8n)$. Thus,
$\nu_2(n)$ is odd and we may write (\ref{nu2}) as
\begin{equation}
\label{nu3}
\nu_2\left(2\ell-n+2\pm\sqrt{(2\ell+n+2)^2-8\ell n}\right)=\frac{1}{2}\left(\nu_2(n)+3\right).
\end{equation}
Since $(2\ell+n+2)^2-8\ell n$ is a perfect square, we can write $8\ell n=4s(2\ell+n-s+2)$ for some integer $s$ such that $s(2\ell+n-s+2)$ is even, i.e., $n=\frac{s(2\ell-s+2)}{2\ell+s}$. Since $n$ is even, $s$ is also even. This allows us to write $2\ell+2-s=2(\ell+1-\frac{s}{2})$, and so we may write (\ref{nu3}) as
\begin{equation}
\label{dollar}
\nu_2\left(\ell+1-\frac{s}{2}\right)+1=\nu_2(n-s)=\frac{1}{2}\left(\nu_2(n)+1\right).
\end{equation}
Let $n=2^{\nu_2(n)}a$, $\ell=2^{\nu_2(\ell)}b$ and $s=2^{\nu_2(s)}c$ for odd $a$, $b$ and $c$. If $\nu_2(n)\leq \nu_2(s)$, then we can write $n-s=2^{\nu_2(n)}(a-2^{\nu_2(s)-\nu_2(n)}c)$. Thus, $\nu_2(n-s)= \nu_2(n)>\frac{1}{2}(\nu_2(n)+1)$ whenever $3\leq \nu_2(n)<\nu_2(s)$ while  $\nu_2(n-s)>\nu_2(n)\geq \frac{1}{2}(\nu_2(n)+1)$ whenever $1\leq \nu_2(n)=\nu_2(s)$. Both subcases contradict (\ref{dollar}), and this allows us to narrow down to the following cases.
\begin{itemize}
\item \small{Let $\nu_2(s)>1$ and $\nu_2(n)=1$ so that $\nu_2(n-s)=1$. If $\ell$ is even, then $\nu_2(\ell+1-\frac{s}{2})=0$. Otherwise, $\nu_2(\ell+1-\frac{s}{2})>0$. Thus, (\ref{dollar}) holds if and only if $\ell$ is even.}
\item Let $\nu_2(n)>\nu_2(s)$ so that $\nu_2(n)\geq 3$. Then $\nu_2(n-s)=\nu_2(s)$, and (\ref{dollar}) holds if and only if
\begin{equation}
\label{dollar1}
\nu_2\left(\ell+1-\frac{s}{2}\right)+1=\nu_2(s)=\frac{1}{2}(\nu_2(n)+1)\geq 2.
\end{equation}
If $\ell$ is even, then $\nu_2(\ell+1-\frac{s}{2})=0$, a contradiction to (\ref{dollar1}). Now, suppose $\ell$ is odd and let $\ell+1=2^{\nu_2(\ell+1)}d$. We have the following subcases.
\begin{itemize}
\item \small{If $\nu_2(\ell+1)>\nu_2(s)-1$, then $\nu_2(\ell+1-\frac{s}{2})=\nu_2(s)-1$, and so (\ref{dollar1}) holds if and only if $\nu_2(s)=\frac{1}{2}(\nu_2(n)+1)$.}
\item If $\nu_2(\ell+1)=\nu_2(s)-1$, then $\nu_2(\ell+1-\frac{s}{2})>\nu_2(s)-1$, and so (\ref{dollar1}) fails.
\item If $\nu_2(\ell+1)<\nu_2(s)-1$, then $\nu_2(\ell+1-\frac{s}{2})=\nu_2(\ell+1)$, and so (\ref{dollar1}) fails.
\end{itemize}
\end{itemize}
Combining the two cases above, we get PST occurring  between $u$ and $v$ whenever $(2\ell+n+2)^2-8\ell n$ is a perfect square if and only if $n=\frac{s(2\ell-s+2)}{2\ell+s}$ for some integer $s$, and either (i) $\nu_2(s)>1$, $\nu_2(n)=1$ and $\ell$ is even or (ii) $\ell$ is odd, $\nu_2(n)>\nu_2(s)$ and $\nu_2(\ell+1)>\nu_2(s)-1$. Finally, we invoke Theorem \ref{pstchartw} to get the minimum time that signless Laplacian PST occurs for the case $\ell>0$ and $(2\ell+n+2)^2-8\ell n$ is a perfect square.
\end{itemize}
Thus, we have covered all cases.
\end{proof}

Next, we examine the case of connected double cones.

\begin{theorem}
\label{condc}
Let $Y$ be a $\ell$-regular graph on $n\geq 1$ vertices with $\ell\neq n-1$. The following hold.
\begin{enumerate}
\item Adjacency perfect state transfer occurs between the apexes of $K_2\vee Y$ if and only if $n=\frac{s(\ell-1+s)}{2}$ for some integer $s$ satisfying $\nu_2(\ell+3)>\nu_2(s-2)\geq 1$ (so that $\ell$ is odd and $s$ is even). In particular, if $n=\frac{s(\ell-1+s)}{2}$ and we let $\ell+1=2^{\nu_2(\ell+1)}a$ and $s=2^{\nu_2(s)}b$ for some odd $a$ and $b$, then the condition $\nu_2(\ell+3)>\nu_2(s-2)\geq 1$ holds if and only if $\nu_2(\ell+1)=1$ and either (i) $\nu_2(s)=1$ and $\nu_2(a+1)> \nu_2(b-1)$, or (ii) $\nu_2(s)>1$. Moreover, the minimum time that perfect state transfer occurs is $\tau=\frac{\pi}{g}$, where $g=\operatorname{gcd}(\ell+s+1,s-2)$.
\item Signless Laplacian perfect state transfer occurs between the apexes of $K_2\vee Y$ if and only if the following conditions hold
\begin{enumerate}
\item $n=2\ell+4$; or
\item $n=\frac{s(2\ell+s)}{s+2}$ for some integer $s$ and either
\begin{enumerate}
\item $\ell$ is even and $\nu_2(s)>1$; or
\item $\ell$ is odd, $n=2a$ and $s=2c$ for some odd $a$ and $c$, and either
\begin{enumerate}
\item $2\nu_2(c-1)=2\nu_2(a-1)-2=\nu_2(\ell+1)\geq 2$
\item $2\nu_2(c-1)=2\nu_2(a-1)=\nu_2(\ell+1)\geq 2$, and $\nu_2(x-y)=1$, where $x$ and $y$ are odd integers such that $c-1=2^{c-1}x$ and $a-1=2^{a-1}y$.
\end{enumerate}
\end{enumerate}
\end{enumerate}
Moreover, if $n=2\ell+4$ and $\ell+3$ is an odd perfect square, then the minimum time that perfect state transfer occurs is $\tau=\frac{\pi}{\sqrt{2(n+2)}}$. Otherwise, it is $\tau=\frac{\pi}{g}$, where $g=\operatorname{gcd}(2\ell-n+s+2,s-2)$.
\end{enumerate}
\end{theorem}
\begin{proof}
Let $u$ and $v$ be the apexes of $K_2\vee H$. By \cite[Corollary 6.9(1)]{Monterde2021}, $u$ and $v$ are strongly cospectral with respect to both $A$ and $Q$. By Lemmas \ref{alphabeta} and \ref{strcospchar}, we obtain $\sigma_{uv}^-(A)=\{-1\}$ and $\sigma_{uv}^-(Q)=\{n-1\}$. 

First, we prove the first statement. By Lemma \ref{esupp11}(2) yields $\sigma_u(A)=\{\lambda^{\pm},\theta\}$, where $\lambda^{\pm}=\frac{1}{2}(\ell+1\pm\sqrt{(\ell-1)^2+8n})$ and $\theta=-1$ in Theorem \ref{pstchartw}. Observe that we can write $\lambda^{\pm}=\theta+\frac{1}{2}(\ell+3\pm\sqrt{(\ell-1)^2+8n})$. Applying Theorem \ref{pstchartw}, adjacency PST occurs between $u$ and $v$ if and only if $(\ell-1)^2+8n$ is a perfect square and
\begin{equation}
\label{Bnu1}
\nu_2\left(\ell+3+\sqrt{(\ell-1)^2+8n}\right)=\nu_2\left(\ell+3-\sqrt{(\ell-1)^2+8n}\right).
\end{equation}
Note that $(\ell-1)^2+8n$ is a perfect square if and only if $8n=4s(\ell-1+s)$ for some integer $s$ such that $s(\ell-1+s)$ is even. Thus, we can write (\ref{Bnu1}) as $\nu_2(\ell+s+1)=\nu_2((\ell+3)+(s-2))=\nu_2(s-2)$, which is equivalent to $\nu_2(\ell+3)>\nu_2(s-2)$. Thus, $\ell$ is odd, and because $s(\ell-1+s)$ is even, $s$ is also even. Let $\ell+1=2^{\nu_2(\ell+1)}a$ and $s=2^{\nu_2(s)}b$ for some odd $a$ and $b$ so that $\ell+3=2(2^{\nu_2(\ell+1)-1}a+1)$ and $s-2=2(2^{\nu_2(s)-1}b-1)$. Using these two equations, one can show that (i) $\nu_2(\ell+3)>\nu_2(s-2)>1$ if and only if $\nu_2(\ell+1)=\nu_2(s)=1$ and $\nu_2(a+1)> \nu_2(b-1)$ and (ii) $\nu_2(\ell+3)>\nu_2(s-2)=1$ if and only if $\nu_2(\ell+1)=1$ and $\nu_2(s)>1$. Equivalently, $\nu_2(\ell+1)=1$ and either (i) $\nu_2(s)=1$ and $\nu_2(a+1)\geq \nu_2(b-1)$, or (ii) $\nu_2(s)>1$. Thus, 1 holds.

Next, we prove the second statement in a similar way we proved Theorem \ref{discdc}(2). By Corollary \ref{esupp33}(2a), $\sigma_u(Q)=\{\lambda^{\pm},\theta\}$, where $\lambda^{\pm}=\frac{1}{2}\left(2\ell+n+4\pm\sqrt{(2\ell+n+4)^2-8(2\ell+2+\ell n)}\right)$ and $\theta=n$ in Theorem \ref{pstchartw}. Observe that we can write $\lambda^{\pm}=\theta+\frac{1}{2}\left(2\ell-n+4\pm\sqrt{(2\ell-n)^2+8n}\right)$. Invoking Theorems \ref{join33}(2) and \ref{pstchartw}(2), we need either $n=2\ell+4$ or $(2\ell-n)^2+8n$ is a perfect square. Let us first look at what happens when $n=2\ell+4$. This yields $\lambda^{\pm}=\theta+\sqrt{2(n+2)}$, and so \ref{pstchartw}(3) gives us signless Laplacian PST. Now, suppose that $(2\ell-n)^2+8n$ is a perfect square. By Theorem \ref{pstchartw}(3), we get signless Laplacian PST between $u$ and $v$ if and only if 
\begin{equation}
\label{mu2}
\nu_2\left(2\ell-n+4+\sqrt{(2\ell-n)^2+8n}\right)=\nu_2\left(2\ell-n+4-\sqrt{(2\ell-n)^2+8n}\right).
\end{equation}
Again, by conjugation, (\ref{mu2}) yields $\nu_2\left((2\ell-n+4\pm \sqrt{(2\ell-n)^2+8n})^2\right)=\nu_2(16(\ell+1))$. Therefore,
$\nu_2(\ell+1)$ is even and we may write (\ref{mu2}) as
\begin{equation}
\label{mu3}
\nu_2\left(2\ell-n+4\pm\sqrt{(2\ell-n)^2+8n}\right)=\frac{1}{2}\nu_2(\ell+1)+2.
\end{equation}
Since $(2\ell-n)^2+8n$ is a perfect square, we can write $8n=4s(2\ell-n+s)$ for some integer $s$ such that $s(2\ell-n+s)$ is even, i.e., $n=\frac{s(2\ell+s)}{s+2}$. Thus, we may write (\ref{mu3}) as
\begin{equation}
\label{dolla}
\nu_2\left(2\ell-n+s+2\right)=\nu_2(s-2)=\frac{1}{2}\nu_2(\ell+1)+1\geq 1.
\end{equation}
If $s$ and $n$ have opposite parities, then $\nu_2\left(2\ell-n+s+2\right)=0$, a contradiction to (\ref{dolla}). Moreover, if $s$ is odd, then $\nu_2(s-2)=0$, again a contradiction to (\ref{dolla}), Thus, both $s$ and $n$ are even. Let $n=2^{\nu_2(n)}a$, $\ell=2^{\nu_2(\ell)}b$ and $s=2^{\nu_2(s)}c$ for odd $a$, $b$ and $c$. We have the following cases.
\begin{itemize}
\item \small{Let $\ell$ be even so that $\nu_2(\ell+1)=0$. By (\ref{dolla}), we get $\nu_2\left(2\ell-n+s+2\right)=\nu_2(s-2)=1$, which holds if and only if $\nu_2(s)>1$ and $\nu_2(n)>1$. As $n=\frac{s(2\ell+s)}{s+2}$, the condition $\nu_2(s)>1$ implies that $\nu_2(n)>1$. Thus, we get signless Laplacian PST in this case if and only if $\nu_2(s)>1$.}
\item Let $\ell$ be odd so that $\nu_2(\ell+1)\geq 2$. Note that $2\ell-n+s+2=2((\ell+1)+(s-n)/2)$. If $\nu_2(n)>\nu_2(s)=1$, then $\nu_2(2\ell-n+s+2)=1$, a contradiction to (\ref{dolla}). Moreover, if $\nu_2(s)>1$, then $\nu_2(s-2)=1$, again a contradiction to (\ref{dolla}). Thus, the only case left is $\nu_2(n)=\nu_2(s)=1$. Let $\nu_2(n)=\nu_2(s)=1$ so that $2\ell-n+s+2=2((\ell+1)+(c-a)/2)$ and $\nu_2(s-2)=2(c-1)$. Then we can write (\ref{dolla}) as
\begin{equation}
\label{dolla2}
\nu_2\left((\ell+1)+(c-a)/2\right)=\nu_2(c-1)=\frac{1}{2}\nu_2(\ell+1)\geq 1.
\end{equation}
\begin{itemize}
\item \small{If $\nu_2(\ell+1)\leq \nu_2(c-a)-1$, then $\nu_2\left((\ell+1)+(c-a)/2\right)\geq \nu_2(\ell+1)> \frac{1}{2}\nu_2(\ell+1)$ because $\nu_2(\ell+1)\geq 2$, a contradiction to (\ref{dolla2}).}
\item If $\nu_2(\ell+1)>\nu_2(c-a)-1$, then $\nu_2\left((\ell+1)+(c-a)/2\right)=\nu_2(c-a)-1$ and so we can write (\ref{dolla2}) as
\begin{equation}
\label{dolla3}
\nu_2(c-a)-1=\nu_2(c-1)=\frac{1}{2}\nu_2(\ell+1).
\end{equation}
If $a=1$, then $\nu_2(c-1)-1=\nu_2(c-1)$ by (\ref{dolla3}), a contradiction, and so $a\geq 3$. Since $c$ and $a$ are odd, we may write $c-1=2^{c-1}x$ and $a-1=2^{a-1}y$, where $x$ and $y$ are odd. Thus, $c-a=2^{c-1}x-2^{a-1}y$. If $\nu_2(c-1)<\nu_2(a-1)$, then $\nu_2(c-a)=\nu_2(c-1)$, and so (\ref{dolla3}) fails. If if $\nu_2(c-1)>\nu_2(a-1)$, then one checks that $\nu_2(c-a)-1=\nu_2(c-1)$ if and only if $\nu_2(c-1)=\nu_2(a-1)-1$. Finally, if $\nu_2(c-1)=\nu_2(a-1)$, then we get $c-a=2^{c-1}(x-y)$, and therefore, $\nu_2(c-a)-1=\nu_2(c-1)$ if and only if $\nu_2(x-y)=1$.
\end{itemize}
Combining the subcases above  for the case when $\ell$ is odd, we obtain signless Laplacian PST if and only if either (i) $\nu_2(c-1)=\nu_2(a-1)-1$ or (ii) $\nu_2(c-1)=\nu_2(a-1)$ and $\nu_2(x-y)=1$. In both cases, (\ref{dolla3}) yields $2\nu_2(c-1)=\nu_2(\ell+1)$, and so we may write conditions (i) and (ii) as (i) $2\nu_2(c-1)=2\nu_2(a-1)-2=\nu_2(\ell+1)\geq 2$ and (ii) $2\nu_2(c-1)=2\nu_2(a-1)=\nu_2(\ell+1)\geq 2$ and $\nu_2(x-y)=1$, respectively.
\end{itemize}
Finally, applying Theorem \ref{pstchartw} yields the minimum PST time.
\end{proof}

It is helpful to note that Theorem \ref{condc} implies that $n$ must be even for adjacency and signless Laplacian PST to occur in $K_2\vee Y$. By virtue of Theorem \ref{discdc}, the same holds for $O_2\vee Y$ with respect to the signless Laplacian matrix, as well as the adjacency matrix provided that $\ell>0$. This observation rules out values of $n$ for which adjacency and signless Laplacian PST can occur in double cones on regular graphs. 

One can show that the conditions in Theorem \ref{discdc}(1b) extend work by Angeles-Canul et al.\ \cite[Corollaries 13, 15]{Angeles-Canul2010}, as their work does not have the additional parameter $s$. At the same time as \cite{Monterde}, Coutinho and Godsil provided a characterization of adjacency PST in double cones \cite[Lemmas 12.4.1, 12.4.2]{Coutinho2021}. Theorem \ref{discdc}(1) and Theorem \ref{condc}(1) coincide with their results, although the proofs herein highlight the utility of the theory we have developed for twin vertices. 

For case of signless Laplacian PST in disconnected double cones, Alvir et al.\ showed using equitable partitions that if $Y$ is an $(m-1)$-regular graph with $2m$ vertices, then $O_2\vee Y$ has PST between its apexes \cite[Theorem 7]{Alvir2016}. These are precisely the graphs described in Theorem \ref{discdc}(2b) with $\ell=m-1$ and $n=2m$, while those that satisfy Theorem \ref{discdc}(2a) are $K_{2,n}$ with $n\equiv 0$ (mod 4). To illustrate Theorem \ref{discdc}(2c), take $s=2(\ell-\sqrt{\ell})$ such that $\ell>0$ is an even perfect square to get an infinite family of disconnected double cones on $\ell$-regular graphs with $n=2\ell-2$ vertices that exhibit signless Laplacian PST with minimum time $\tau=\frac{\pi}{2}$. To the best of our knowledge, there are no previously published results on signless Laplacian PST in connected double cones. To generate one such family of graphs that exhibit signless Laplacian PST, one may use Theorem \ref{condc}(2a). Another way is by using Theorem \ref{condc}(2b). Indeed, we get one by taking $s=2\sqrt{\ell}$ such that $\ell>0$ is an even perfect square, and this family exhibits signless Laplacian PST with minimum time $\tau=\frac{\pi}{2}$.

We also remark that for the Laplacian case, PST in double cones over graphs with at least one vertex was fully characterized by Alvir et al.\ in \cite{Alvir2016} (see Corollary 5 for $O_2\vee Y$ and Corollary 6 for $K_2\vee Y$). In particular, they showed that if $Y$ is a graph on $n\geq 1$ vertices, then $O_2\vee Y$ has PST between its apexes if and only if $n\equiv 2$ (mod 4) with minimum PST time $\tau=\frac{\pi}{2}$. In contrast, $K_2\vee Y$ has no PST.

We end this section with an interesting observation about phase factors in signless Laplacian PST. Let $u$ and $v$ be the apexes of $X\vee Y$, where $X\in\{O_2,K_2\}$ and $Y$ be an $\ell$-regular graph on $n$ vertices graph such that $\ell\neq n-1$ whenever $X=K_2$. From Corollary \ref{esupp33}, we know that $\sigma_u(Q)=\{\theta,\lambda^{\pm}\}$, where $\theta=n$ and $\lambda^{\pm}$ depends on whether $X=O_2$ or $X=K_2$. In both cases, we know that $u$ and $v$ are strongly cospectral, and invoking Lemmas \ref{alphabeta} and \ref{strcospchar} gives us $\sigma_{uv}^-(Q)=\{\theta\}$. Thus, (\ref{haha1}) yields
\begin{equation*}
e^{it\lambda^+}=e^{it\lambda^-}=-e^{it\theta}
\end{equation*}
Now, if $\ell>0$ and $n=2\ell+2$, then Theorem \ref{discdc}(2b) implies that signless Laplacian PST occurs between the apexes of $O_2\vee Y$ with minimum PST time $\tau=\frac{\pi}{\sqrt{2n}}$. Thus, if $t=\tau s$ for any odd integer $s$ and $2n$ is not a perfect square, then $t\theta=\frac{\pi s\sqrt{2n}}{2}$ is irrational, and consequently, the phase factor $\gamma=-e^{it\theta}$ for signless Laplacian PST is not a root of unity. The same can be said for $K_2\vee Y$ whenever $n=2\ell+4$ and $2(n+2)$ is not a perfect square, in which case the minimum PST time $\tau=\frac{\pi}{\sqrt{2(n+2)}}$. This observation complements a remark of Coutinho and Godsil in \cite[Section 12.5, pp. 220]{Coutinho2021} which states that the phase factor for all currently known cases of adjacency PST in simple unweighted graphs is a root of unity. We conjecture that this is not true for unweighted graphs in general. For weighted graphs with possible loops, it is easy to see that adding $\eta I$ to $M$ for some $\eta\notin\mathbb{Q}\pi$ introduces a phase factor that is not a root of unity. Finally, for the Laplacian case, we know that $0\in \sigma_{uv}^+(L)$ for any two strongly cospectral vertices $u$ and $v$, and so the phase factor for all cases of PST in simple unweighted graphs is $\gamma=1$. This result can be shown to extend to all simple graphs with positive edge weights.

\section{Pretty good state transfer}\label{secPGST}

The following result characterizes twin vertices that exhibit PGST, which is an immediate consequence of \cite[Lemma 2.2]{Kempton2017a}.

\begin{theorem}
\label{pgsttw}
Let $T=\{u,v\}$ be a set of twins in $X$ and $\sigma_u(M)=\{\theta,\lambda_1,\ldots,\lambda_r\}$, where $\theta$ is given in (\ref{adjalpha}). Then pretty good state transfer occurs between $u$ and $v$ if and only if the following conditions hold.
\begin{enumerate}
\item $u$ and $v$ are strongly cospectral with $\sigma_{uv}^+(M)=\{\lambda_1,\ldots,\lambda_r\}$ and $\sigma_{uv}^-(M)=\{\theta\}$.
\item If $m_j$ are integers such that
\begin{equation}
\sum_{j}m_j(\lambda_j-\theta)=0
\end{equation}
then 
\begin{equation}
\sum_jm_j\ \text{is even}.
\end{equation}
\end{enumerate}
\end{theorem}

By Theorem \ref{pgsttw}, if $u$ and $v$ are strongly cospectral twins with $\theta=0$ and $\sigma_{uv}^+(M)$ is a linearly independent set over $\mathbb{Q}$, then PGST occurs between $u$ and $v$.

PGST is a relaxation of PST, and it is known that these two quantum phenomena are equivalent for periodic vertices (see for instance, \cite{Pal2019}). Thus, to distinguish vertices that exhibit PGST but not PST, we say that \textit{proper pretty good state transfer} occurs between two vertices if PGST occurs between them and they are not periodic. The following result characterizes the double cones that exhibit proper PGST thereby providing a family of examples of graphs exhibiting proper PGST.

\begin{theorem}
\label{discdcpgst}
Let $Y$ be a simple unweighted graph on $n\geq 1$ vertices. The following hold.
\begin{enumerate}
\item Proper pretty good state transfer does not occur between the apexes of $O_2\vee Y$ and $K_2\vee Y$ with respect to the Laplacian matrix.
\item Suppose $Y$ is $\ell$-regular. The following hold with respect to the adjacency matrix.
\begin{enumerate}
\item Proper pretty good state transfer occurs between the apexes of $O_2\vee Y$ if and only if $\ell>0$ and $\ell^2+8n$ is not a perfect square.
\item Proper pretty good state transfer occurs between the apexes of $K_2\vee Y$ if and only if $\ell\neq n-1$ and $(\ell-1)^2+8n$ is not a perfect square.
\end{enumerate}
\item Suppose $Y$ is $\ell$-regular. The following hold with respect to the signless Laplacian matrix.
\begin{enumerate}
\item Proper pretty good state transfer occurs between the apexes of $O_2\vee Y$ if and only if $\ell>0$, $n\neq 2\ell+2$ and $(2\ell+n+2)^2-8\ell n$ is not a perfect square.
\item Proper pretty good state transfer occurs between the apexes of $K_2\vee Y$ if and only if $\ell\neq n-1$, $n\neq 2\ell+4$ and $(2\ell+n+4)^2-8(2\ell+\ell n+2)$ is not a perfect square.
\end{enumerate}
\end{enumerate}
\end{theorem}

\begin{proof}
Let $u$ and $v$ be apexes of both $O_2\vee Y$ and $K_2\vee Y$. By Theorem \ref{join22}(1), we know that $u$ and $v$ are both periodic, and so proper PGST does not occur between them. Thus, the first statement holds. Next, we know from the proofs of Theorems \ref{discdc} and \ref{condc} that the apexes of $O_2\vee Y$ are adjacency and signless Laplacian strongly cospectral, while the apexes of $K_2\vee Y$ are adjacency and signless Laplacian strongly cospectral if and only if $\ell\neq n-1$ (i.e., $H\neq K_n$). Thus, to prove the second, we examine two cases.
\begin{itemize}
\item \small{In $O_2\vee Y$, Lemma \ref{esupp11}(2) gives us $\sigma_u(A)=\{\lambda^{\pm},\theta\}$, where $\lambda^{\pm}=\frac{1}{2}\left(\ell\pm\sqrt{\ell^2+8n}\right)$ and $\theta=0$. By Theorem \ref{join11}(2), $u$ and $v$ are periodic if and only if either $\ell=0$ or $\ell^2+8n$ is a perfect square. Thus, if proper PGST occurs between $u$ and $v$, then $\ell>0$ and $\ell^2+8n$ is not a perfect square, in which case $\sigma_{uv}^+(A)=\{\lambda^{\pm}\}$ is a linearly independent set over $\mathbb{Q}$.}
\item Meanwhile, in $K_2\vee Y$, Lemma \ref{esupp11}(2) yields $\sigma_u(A)=\{\lambda^{\pm},\theta\}$, where $\lambda^{\pm}=\frac{1}{2}\left(\ell+1\pm\sqrt{(\ell-1)^2+8n}\right)$ and $\theta=-1$. By Theorem \ref{join11}(2), $u$ and $v$ are periodic if and only if $(\ell-1)^2+8n$ is a perfect square. Hence, if proper PGST occurs between $u$ and $v$, then it must be that $(\ell-1)^2+8n$ is not a perfect square, in which case $\sigma_{uv}^+(A)=\{\lambda^{\pm}\}$ is a linearly independent set over $\mathbb{Q}$. 
\end{itemize}
In both cases, applying Theorem \ref{pgsttw} yields proper PGST between $u$ and $v$. Finally, let us prove the last statement. Let $\theta=n$. We know from Corollary \ref{esupp33}(2) that $\sigma_u(Q)=\{\lambda^{\pm},\theta\}$ in $O_2\vee Y$, where $\lambda^{\pm}=\theta+\frac{1}{2}\left(2\ell-n+4\pm\sqrt{\Delta}\right)$ and $\Delta=(2\ell+n+2)^2-8\ell n$, while $\sigma_u(Q)=\{\lambda^{\pm},\theta\}$ in $K_2\vee Y$, where $\lambda^{\pm}=\theta+\frac{1}{2}\left(2\ell-n+2\pm\sqrt{\Delta}\right)$ and $\Delta=(2\ell+n+4)^2-8(2\ell+\ell n+2)$. Following the same argument above yields the desired result.
\end{proof}

We illustrate Theorem \ref{discdcpgst} using a complete graph minus an edge.

\begin{example}
\label{knminuse}
Let $m\geq 4$, and consider the complete graph minus an edge $K_m\backslash e$, where $u$ and $v$ are the vertices joining the missing edge $e$. Note that $K_m\backslash e=O_2\vee K_{m-2}$, where $K_{m-2}$ is an $(m-3)$-regular graph on $m-2$ vertices. Let $k=m-3$ and $n=m-2$. We make the following observations about $K_m\backslash e$.
\begin{itemize}
\item \small{Since $m-3>0$ and $k^2+8n=(m+1)^2-8$ is not a perfect square for all $m\geq 4$, Theorem \ref{discdcpgst}(2a) yields proper adjacency PGST between $u$ and $v$. Indeed, since $\sigma_u(A)=\{0,m-3\pm\sqrt{(m+1)^2-8}\}$, Theorem \ref{ratiocon} implies that $u$ is not periodic, and so adjacency PST cannot occur between them. The authors in \cite{Casaccino2009} conjectured based on numerical observations that adjacency PST occurs between the $u$ and $v$ for all $m\geq 4$. But as we have shown, this is not the case, and what the authors observed was in fact proper PGST. Indeed, the paper was published in 2009, and PGST was not formally introduced until 2012.}
\item We show that Laplacian PST occurs in $K_m\backslash e$. By Lemma \ref{esupp22}(2), $\sigma_u(L)=\{0,m,\theta\}$, where $\theta=m-2$, and note that we can write $0=\theta+(m-2)$ and $m=\theta+2$. Since $\nu_2(2)=\nu_2(m-2)$ if and only if $m-2=2q$ for some odd $q$, Theorem \ref{pstchartw} yields Laplacian PST between $u$ and $v$ if and only if $m\equiv 0$ (mod 4). In particular, the minimum time that Laplacian PST occurs between $u$ and $v$ is $\tau=\frac{\pi}{2}$. Thus, the converse of \cite[Theorem 5]{Bose2008}, which states that Laplacian PST occurs between the two non-adjacent vertices of $K_m\backslash e$ whenever $m\equiv 0$ (mod 4), is also true. A more general result can be found in \cite[Theorem 2]{Severini}, which characterizes Laplacian PST in threshold graphs, of which $K_m\backslash e$ is an example.
\item For the signless Laplacian case, one checks that $(2k+n+2)^2-8kn=(m+2)^2-16$. However, since $(m+2)^2-16$ is not a perfect square for all $m\geq 4$, invokingTheorem \ref{discdcpgst}(2a) gives us signless Laplacian proper PGST between $u$ and $v$.
\end{itemize}
\end{example}

We end this section by combining Theorems \ref{join11} and \ref{join33}, Theorems \ref{discdc} and \ref{condc}, and Theorem \ref{discdcpgst} to provide a plethora of cases when PST, PGST or periodicity occurs, or cannot occur, in double cones.

\begin{example}
Let $Y$ be an $\ell$-regular graph on $n$ vertices.
\begin{enumerate}
\item For the adjacency case, we have the following.
\begin{enumerate}
\item Let $\ell=4$, and let $X=O_2\vee Y$.
\begin{enumerate}
\item If $n=\frac{1}{2}s(s+4)$ for some integer $s$ satisfying $\nu_2(s)=1$, then the apexes of $X$ exhibit PST by Theorem \ref{discdc}(1) with the minimum PST time $\tau=\frac{\pi}{2}$.
\item If $n=\frac{1}{2}s(s+4)$ for some integer $s$ satisfying $\nu_2(s)>1$, then the apexes of $X$ are periodic by Theorem \ref{join11}(2b) with $\rho=\pi$, but they do not exhibit PST by Theorem \ref{discdc}(1).
\item If $n\neq \frac{1}{2}s(s+4)$ for any integer $s$, then the apexes of $X$ exhibit proper PGST by Theorem \ref{discdcpgst}(2a).
\end{enumerate}
\item Let $\ell=1$, and let $X=K_2\vee Y$.
\begin{enumerate}
\item If $n=\frac{s^2}{2}$ for some integer $s$ satisfying $\nu_2(s)>1$, then the apexes of $X$ exhibit PST by Theorem \ref{condc}(1) with the minimum PST time is $\frac{\pi}{2}$.
\item If $n=\frac{s^2}{2}$ for some integer $s$ satisfying $\nu_2(s)=1$, then the apexes of $X$ are periodic by Theorem \ref{join11}(2b) with $\rho=\pi$, but they do not exhibit PST by Theorem \ref{condc}(1).
\item If $n\neq \frac{s^2}{2}$ for any integer $s$, then the apexes of $X$ exhibit proper PGST by Theorem \ref{discdcpgst}(2b).
\end{enumerate}
\end{enumerate}
\item For the signless Laplacian case, we have the following.
\begin{enumerate}
\item Let $\ell>0$ and $n=2\ell-2$, and let $X=O_2\vee Y$.
\begin{enumerate}
\item If $\ell$ is an even perfect square, then the apexes of $X$ exhibit PST by Theorem \ref{discdc}(2c) with minimum PST time $\tau=\frac{\pi}{2}$.
\item If $\ell$ is an odd perfect square, then the apexes of $X$ are periodic by Theorem \ref{join33}(2) with $\rho=\pi$, but they do not exhibit PST by Theorem \ref{discdc}(2c).
\item If $\ell$ is not a perfect square, then the apexes of $X$ exhibit proper PGST by Theorem \ref{discdcpgst}(3a).
\end{enumerate}
\item Let $\ell>0$ and $n=2\ell$, and let $X=K_2\vee Y$.
\begin{enumerate}
\item If $\ell$ is an even perfect square, then the apexes of $X$ exhibit PST by Theorem \ref{condc}(2c) with minimum PST time $\tau=\frac{\pi}{2}$.
\item If $\ell$ is an odd perfect square, then the apexes of $X$ are periodic by Theorem \ref{join33}(1) with $\rho=\pi$, but they do not exhibit PST by Theorem \ref{condc}(2c).
\item If $\ell$ is not a perfect square, then the apexes of $X$ exhibit proper PGST by Theorem \ref{discdcpgst}(3b).
\end{enumerate}
\end{enumerate}
\end{enumerate}
\end{example}

\section*{Acknowledgments}
H.M.\ is supported by the University of Manitoba Faculty of Science and Faculty of Graduate Studies. S.K.\ is supported by NSERC Discovery Grant RGPIN-2019-05408. S.P.\ is supported by NSERC Discovery Grant number 1174582, the Canada Foundation for Innovation (CFI) grant number 35711, and the Canada Research Chairs (CRC) Program grant number 231250. We thank the referees for their comments and suggestions that helped improve this paper.

%
%

\bibliographystyle{alpha}
\bibliography{mybibfile}
\end{document}